\newtheorem{theorem}{Theorem}[section]
\newtheorem{lemma}[theorem]{Lemma}
\newtheorem{fact}[theorem]{Fact}
\newtheorem{corollary}[theorem]{Corollary}
\theoremstyle{definition}
\newtheorem{definition}[theorem]{Definition}
\newtheorem{example}[theorem]{Example}
\newtheorem{problem}[theorem]{Problem}
\newtheorem{remark}[theorem]{Remark}
\newtheorem{question}[theorem]{Question}
\newtheorem{claim}{Claim}
\def\N{\mathbb{N}}
\def\Z{\mathbb{Z}}
\def\Z{\mathbb{Z}}
\def \Zet[#1]{\lceil #1 \rceil}
\def\wprod{*}
\newenvironment{revds}{\color{red}}{}
\newenvironment{revvy}{\color{magenta}}{}
\newenvironment{revnew}{\color{blue}}{}
\def\bds{\begin{revds}}
\def\eds{\end{revds}}
\def\by{\begin{revvy}}
\def\ey{\end{revvy}}
\def\bn{\begin{revnew}}
\def\en{\end{revnew}}
\begin{document}
\title[Markov's problem
for free 
groups]
{Markov's problem
for free groups}

\author[D. Shakhmatov]{Dmitri Shakhmatov}
\address{Graduate School of Science and Engineering\\
Ehime University, Matsuyama 790-8577, Japan}
\email{dmitri.shakhmatov@ehime-u.ac.jp}
\thanks{The first listed author was partially supported by the Grant-in-Aid for Scientific Research~(C) No.~20K03615 by the Japan Society for the Promotion of Science (JSPS)}

\author[V.H. Ya\~nez]{V\'{\i}ctor Hugo Ya\~nez}
\address{Institute of Mathematics \\
Nanjing Normal University, Nanjing 210046, China}
\email{victor\textunderscore yanez@comunidad.unam.mx}

\keywords{free group, free product, algebraic set, unconditionally closed set, verbal topology, Zariski topology, Markov topology, precompact Markov topology, residually finite group, Bohr topology, Bohr convergent sequence, Noetherian space, Polish group, Graev metric, extension of Hausdorff group topologies}

\subjclass{Primary: 20E05; Secondary: 03C55, 03E05, 03E75, 14L99, 20E06, 22A05, 54E50, 54H11}

\begin{abstract}
We prove that every unconditionally closed subset of a free
group is algebraic, thereby answering affirmatively a 77 years old problem of Markov for free groups. In modern terminology, this means that Markov and Zariski topologies coincide in free groups.
It follows
that the class of groups for which Markov and Zariski topologies coincide 
is not closed under taking quotients.  
We also show that 
Markov and Zariski 
topologies differ from the so-called precompact Markov topology in non-commutative free groups.
\end{abstract}

\maketitle

\section{Introduction and main results}

Let $G$ be a group. Pick $x\notin G$ and consider the free product $G\wprod \langle x\rangle$ of $G$ with the cyclic group $\langle x\rangle \cong \Z$ generated by $x$. For every $g\in G$, let $\mathrm{ev}_g: 
G\wprod \langle x\rangle\to G$ be the unique homomorphism which sends $x$ to $g$ and coincides with the identity on $G$. 
For every word $w\in G\wprod \langle x\rangle$, the set
$E_w=\{g\in G: \mathrm{ev}_g(w)=1\}$ is 
the solution set in $G$ of the equation $w=1$ with a single variable $x$ and coefficients taken from $G$.
(Here $1$ denotes the identity of $G$.)
The coarsest topology $\mathfrak{Z}_G$ on $G$ such that all solution sets $E_w$ for $w\in G\wprod \langle x\rangle$ are $\mathfrak{Z}_G$-closed is called the {\em verbal\/} \cite{Bryant}
or {\em Zariski\/} \cite{BMR, DS-OPIT2, DS_R, DS_MZ} topology of $G$. 
In \cite{Markov44, Markov45, Markov46},
Markov calls 
the solution set $E_w$ for a word $w\in G\wprod \langle x\rangle$ an {\em elementary algebraic set\/} in $G$
and he calls
$\mathfrak{Z}_G$-closed sets {\em algebraic\/} subsets of $G$.

Dikranjan and Toller in 
\cite[Section 5.3]{DT-Ischia} 
derived the following explicit description of the Zariski topology of free groups from 
results
of Bryant \cite{Bryant}, Guba  \cite{Guba}, and 
Chiswell and Remeslennikov \cite{CR}:
\begin{theorem}
\label{derived:from:Guba}
Zariski closed subsets of a free group $G$ are finite unions of singletons or 
sets of the form 
$aC_G(b)c$, 
where $a,b,c\in G$ and
$C_G(b)
=\{g\in G: bg=gb\}
$ is the centralizer of $b$ in $G$.
\end{theorem}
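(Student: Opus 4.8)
The plan is to prove the two inclusions separately: that every finite union of singletons and sets $aC_G(b)c$ is $\mathfrak{Z}_G$-closed, and conversely that every $\mathfrak{Z}_G$-closed set has this shape. The first inclusion is direct, since it suffices to exhibit each building block as an elementary algebraic set and then use that finite unions of $\mathfrak{Z}_G$-closed sets are $\mathfrak{Z}_G$-closed. A singleton is handled by $\{g\}=E_w$ with $w=xg^{-1}$, and a two-sided translate of a centralizer by $aC_G(b)c=E_w$ with $w=(a^{-1}xc^{-1})\,b\,(a^{-1}xc^{-1})^{-1}b^{-1}$, because $\mathrm{ev}_g(w)=1$ says exactly that $a^{-1}gc^{-1}$ commutes with $b$. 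Note also that $G=C_G(1)$ is itself of the stated form.

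For the converse I would recall that, by the definition of the coarsest topology making all $E_w$ closed, the $\mathfrak{Z}_G$-closed sets are precisely the arbitrary intersections of finite unions of elementary algebraic sets. The argument then has two steps. First, the cited description of the solutions of a one-variable equation over a free group (Chiswell and Remeslennikov, building on Guba and Bryant) shows that each $E_w$ is either all of $G$ or a finite union of singletons and cosets of centralizers; hence so is every finite union of elementary algebraic sets. Second, to descend from such finite unions to their arbitrary intersections, I would invoke the Noetherianity of the Zariski topology of a free group (the equational Noetherianity of $G$): in a Noetherian space every intersection of closed sets equals a finite subintersection, so it suffices to show that the family $\mathcal F$ of finite unions of singletons and centralizer cosets is closed under finite, hence pairwise, intersection.

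Distributing over the unions, pairwise intersection reduces to intersecting two cosets $a_1C_G(b_1)c_1$ and $a_2C_G(b_2)c_2$, or a coset with a singleton; the latter is plainly empty or a singleton. For the former, I would rewrite each two-sided coset as a left coset of a conjugate centralizer, $a_iC_G(b_i)c_i=(a_ic_i)C_G(c_i^{-1}b_ic_i)$, so that the intersection becomes $g_1D_1\cap g_2D_2$ with $D_1,D_2$ centralizers; this is empty or a coset of $D_1\cap D_2$. Here the structure of free groups finishes the job: the centralizer of a nontrivial element is a maximal cyclic subgroup, $C_G(1)=G$, and two distinct maximal cyclic subgroups of a free group meet trivially. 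Thus $D_1\cap D_2$ is a centralizer (when the two agree, or one equals $G$) or is $\{1\}$, so the intersection is empty, a single centralizer coset, or a singleton. This closes $\mathcal F$ under intersection and finishes the derivation.

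The real content is not this bookkeeping but the two imported facts on which it rests, namely the explicit classification of one-variable solution sets and the Noetherianity of the free-group Zariski topology; following the attribution, I would use both as black boxes. The one point demanding care when assembling them is to verify that cosets $a\langle v\rangle c$ with $v$ a proper power never arise as complete solution sets --- only full centralizer cosets $aC_G(b)c$ do --- since otherwise a set lying outside $\mathcal F$ would be produced and the stated description would fail.
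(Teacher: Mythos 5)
The paper itself gives no proof of Theorem \ref{derived:from:Guba}: it imports the result from Dikranjan and Toller \cite[Section 5.3]{DT-Ischia}, whose derivation rests on exactly the ingredients you invoke --- Chiswell--Remeslennikov's classification of one-variable solution sets, the Noetherianity of $\mathfrak{Z}_G$ stemming from Guba's equational Noetherianity (cited in this paper as \cite[Corollary 5.2]{DT-Ischia}, and established independently of the description, so your use of it is not circular), plus the coset/centralizer bookkeeping special to free groups. Your reconstruction is correct and follows essentially that same route, including the apt caution that the classification must produce cosets of centralizers (maximal cyclic subgroups) rather than of arbitrary cyclic subgroups.
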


It was noted in \cite[Section 5.4.1]{Toller} that, without loss of generality, one can take $c=1$ in Theorem \ref{derived:from:Guba}.
We also note that the set $C_G(b)$ in Theorem \ref{derived:from:Guba} is an infinite cyclic subgroup of $G$ containing $b$ but it need not coincide with the cyclic subgroup of $G$ generated by $b$.

In \cite{Markov44, Markov45, Markov46},
Markov 
says that
a subset $S$ of $G$ is {\em unconditionally closed\/} in $G$ if $S$ is closed in {\em every\/} Hausdorff group topology on $G$.
The family of all unconditionally closed subsets of $G$ coincides with the family of $\mathfrak{M}_G$-closed sets of a unique topology $\mathfrak{M}_G$ on $G$ called its {\em Markov} topology
in \cite{DS-OPIT2, DS_R, DS_MZ}. 

It is clear that 
algebraic subsets of $G$ are unconditionally closed, so the inclusion $\mathfrak{Z}_G\subseteq \mathfrak{M}_G$ holds. 
In 1945 
Markov  \cite{Markov45} asked whether 
every unconditionally closed 
set
is algebraic, or equivalently, whether the equality $\mathfrak{Z}_G= \mathfrak{M}_G$ holds for every group $G$. In 
 \cite{Markov46}
he gave a positive answer for all countable groups, and 
in \cite{Markov45} he
attributed to Perel'man a positive answer for all abelian groups. However, Perel'man's result remained unpublished, so 
proofs were given only much later in 
\cite{Sipacheva, DS_R}. 

Clearly, the Markov topology $\mathfrak{M}_G$ of a group $G$ is discrete if and only if $G$ is {\em non-topologizable\/}, i.e., does not admit a non-discrete Hausdorff group topology.  
Whether non-topologizable groups exist was another old problem of Markov.
Shelah \cite{Shelah} gave a
first
 example of 
a 
non-topologizable group 
(of size continuum)
under the Continuum Hypothesis CH.
Later 
examples of non-topologizable groups were constructed in ZFC
by Ol'shanskii \cite{Ol}
and Hesse \cite{Hesse}.
Ol'shanskii's example is countable.
\begin{example}[Hesse
\cite{Hesse}]
\label{exa:Hesse}
For every regular cardinal $\kappa$, 
there exists a
non-topologizable group $G_\kappa$
such that the singleton $\{1\}$ cannot be represented as an intersection of
$\le\kappa$-many $\mathfrak{Z}_{G_\kappa}$-open sets.
In particular, 
$\mathfrak{Z}_{G_\kappa}$ is non-discrete, while
$\mathfrak{M}_{G_\kappa}$ is discrete, so
$\mathfrak{Z}_{G_\kappa}\neq \mathfrak{M}_{G_\kappa}$. 
\end{example}
Among all non-topologizable groups mentioned above, only Hesse's example resolves negatively the Markov's 
$\mathfrak{Z}_G= \mathfrak{M}_G$ problem.
Indeed,
$\mathfrak{Z}_G=\mathfrak{M}_G$ for the Ol'shanskii group $G$, as $G$ is countable, 
and the equality $\mathfrak{Z}_G=\mathfrak{M}_G$ holds for all countable groups \cite{Markov45}.
However,
Sipacheva \cite{Sip2007} showed that 
a suitable modification of
Shelah's construction 
yields an
example of a non-topologizable group $G$ 
having
a non-discrete 
Zariski topology $\mathfrak{Z}_G$,
thereby providing 
a 
counter-example to Markov's problem
under CH.
This may provide further context
due to the fact that
Hesse's
ZFC 
counter-example to Markov's problem remained unpublished (except for PhD thesis \cite{Hesse}; 
our formulation of Example \ref{exa:Hesse} is extracted from
\cite[Theorem 3.1]{DT-Ischia}).

Motivated by 
Example \ref{exa:Hesse},
Dikranjan and the first listed author asked in \cite{DS-OPIT2} whether the answer to Markov's problem is affirmative at least for permutation groups and free groups.
Banakh, Guran and Protasov \cite{BGP} proved that Markov and Zariski topologies coincide for all permutation groups, thereby answering 
\cite[Question 38]{DS-OPIT2}.

In this paper, 
we answer 
\cite[Question 39]{DS-OPIT2}, repeated also as 
\cite[Question 8.1]{DT-Ischia},
by confirming Markov's conjecture for all free (non-commutative) groups.

\begin{theorem}
\label{main:theorem}
Every unconditionally closed subset of a free group is algebraic.
Therefore, Markov and Zariski topologies coincide in free groups.
\end{theorem}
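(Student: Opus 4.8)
The plan is to prove the nontrivial inclusion $\mathfrak{M}_G \subseteq \mathfrak{Z}_G$, i.e.\ that every unconditionally closed set is algebraic. By Theorem~\ref{derived:from:Guba} the Zariski-closed sets are exactly the finite unions of singletons and "coset-centralizer" sets $aC_G(b)$ (taking $c=1$), so the Zariski topology is Noetherian: its closed sets satisfy the descending chain condition, and equivalently every $\mathfrak{Z}_G$-closed set is a finite union of irreducible ones. Dually, a set fails to be algebraic precisely when it is $\mathfrak{Z}_G$-dense in some infinite irreducible piece without filling it. The strategy is therefore contrapositive: given a subset $S$ that is \emph{not} algebraic, I would produce a single Hausdorff group topology $\tau$ on $G$ in which $S$ is not $\tau$-closed, thereby showing $S$ is not unconditionally closed.

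First I would reduce to a clean normal form. Since $\mathfrak{Z}_G$ is Noetherian, the $\mathfrak{Z}_G$-closure of $S$ is a finite union $F \cup aC_G(b)$ (plus finitely many further coset-centralizer sets), and by the standard decomposition it suffices to treat the case where $S$ is a $\mathfrak{Z}_G$-dense but proper subset of a single infinite cyclic coset $aC_G(b)$. Writing $C_G(b) = \langle t \rangle \cong \Z$ for a suitable generator $t$, the set $S$ corresponds, after translating by $a^{-1}$ and identifying $\langle t\rangle$ with $\Z$, to a subset $A \subsetneq \Z$ that is infinite and co-infinite (density in the Zariski/cofinite topology on $\Z$ forcing $A$ infinite, properness forcing the complement nonempty, and a short argument upgrading this to co-infinite). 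The task becomes: realize $\Z$ as a subgroup $\langle t \rangle$ of the free group $G$ in such a way that some Hausdorff group topology on $G$ induces on $\langle t\rangle$ a topology in which $A$ is not closed — concretely, a topology with a nontrivial convergent sequence hitting both $A$ and its complement along a cluster point.

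The heart of the matter is the \emph{extension step}: building a Hausdorff group topology on the whole free group $G$ that induces a prescribed nondiscrete topology on the chosen cyclic subgroup. Here I would exploit that free groups are residually finite and, more usefully, admit rich families of topologies via the Bohr/precompact machinery and via Graev metrics. The cleanest route is to choose a sequence $(z_n)$ in $\langle t\rangle \cong \Z$ converging to $0$ that meets both $A$ and $\Z\setminus A$ infinitely often, and to find a Hausdorff group topology on $G$ in which $z_n \to 1$; then the cluster point of $S$ under this topology lies outside $S$, breaking closedness. Producing such a topology on a free group in which a prescribed integer sequence converges to the identity is exactly the kind of statement the paper's keywords (Bohr convergent sequence, Graev metric, extension of Hausdorff group topologies) point to, and this is where I expect the genuine difficulty to lie: one must guarantee both convergence of the prescribed sequence and Hausdorffness simultaneously, which for an arbitrary co-infinite $A$ requires a careful choice of the characters or metric that separate points while forcing convergence.

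The main obstacle, then, is this extension/realization problem, not the reduction. Once one has a general lemma of the form "for every free group $G$, every infinite cyclic subgroup $\langle t\rangle$, and every sequence in $\langle t\rangle$ that is suitably unbounded, there is a Hausdorff group topology on $G$ making that sequence converge to~$1$," the theorem follows by the contrapositive argument above. I would try to obtain such a lemma either by transferring a metrizable group topology from a Polish completion of $\Z$ (or of a larger free quotient) back to $G$ along a residually-finite approximation, or by directly constructing a precompact topology from a carefully chosen dense subgroup of characters on an abelianization-compatible quotient; controlling Hausdorffness on the non-abelian part while prescribing convergence on $\langle t\rangle$ is the delicate point that the bulk of the paper must address.
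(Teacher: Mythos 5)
Your reduction is essentially sound: using Theorem~\ref{derived:from:Guba} and Noetherianity of $\mathfrak{Z}_G$, a non-algebraic $S$ must meet some coset $a C_G(b)=a\langle t\rangle$ in an infinite set $A$ (viewed in $\Z$) that omits a point, and to contradict unconditional closedness it suffices to exhibit one Hausdorff group topology on $G$ in which some sequence $t^{j_k}$, with distinct $j_k$ drawn from a translate of $A$, converges to $1$. (A minor slip: you cannot upgrade ``complement nonempty'' to ``co-infinite'' --- take $S=a\langle t\rangle\setminus\{a\}$ --- but this is also unnecessary for your argument.) The genuine gap is precisely the lemma you flag as the heart of the matter and never prove: for an \emph{arbitrary} non-proper-power $t\in G$ and an \emph{arbitrary} infinite $J\subseteq\Z$, produce a Hausdorff group topology on $G$ making some sequence $t^{j_k}\to 1$ with $j_k\in J$. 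The routes you sketch face real obstructions. First, $C_G(b)=\langle t\rangle$ need not be generated by any member of a free basis (in $F(x,y)$ the elements $[x,y]$ and $x^2y^2$ are not proper powers but are not primitive), so $\langle t\rangle$ is \emph{not} a free factor of $G$, and no free-product extension machinery (Graev, Slutsky) applies to it; moreover, by the authors' companion work cited in the paper, not every subgroup of $F_2$ is Hausdorff embedded, so there is no general extension principle to fall back on --- one would need an argument specific to maximal cyclic subgroups, which exists nowhere in the literature you invoke. Second, the character-theoretic/precompact route is worse: any family of homomorphisms factoring through the abelianization fails to separate points of a non-commutative free group, so you would need a point-separating family of homomorphisms into compact groups all sending $t$ to an element $u$ with $u^{j_k}\to 1$; for non-primitive $t$ (say a commutator) this amounts to prescribing the image of a non-basis element under a faithful representation into a compact group, a hard problem in its own right. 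So the central step of your proposal is missing, and it is not a routine step --- it carries essentially the full difficulty of the theorem.

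It is worth noting that the paper's actual proof avoids this entirely and is structured in the opposite way: it never uses Theorem~\ref{derived:from:Guba} (the authors stress this), never decomposes a particular set $S$, and never builds a topology adapted to $S$. Instead it applies a set-theoretic reflection principle (Corollary~\ref{Hausdorff:embedded:corollary}): $\mathfrak{Z}_G=\mathfrak{M}_G$ follows once the family $\mathcal{C}=\{F(Z):Z\in[X]^{\le\omega}\}$, which is a club in $[G]^{\le\omega}$ by Lemma~\ref{natural:club}, consists of Hausdorff embedded subgroups. That Hausdorff embeddedness is exactly where Slutsky's theorem on free products of Polish groups enters (Fact~\ref{slutsky}, Theorem~\ref{second:lemma}, together with the monomorphism of $F(X)$ into a power of $F(Y)$ from Lemma~\ref{first:lemma}), and Markov's classical theorem for countable groups finishes the job inside each $F(Z)$. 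The key structural point is that the paper only ever needs to extend topologies from subgroups generated by \emph{subsets of a free basis} --- precisely the case to which your reduction cannot be confined, since the cosets in Theorem~\ref{derived:from:Guba} are cosets of arbitrary maximal cyclic subgroups.
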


When combined with Theorem \ref{derived:from:Guba}, this result provides the complete description of the Markov topology of free groups.

The proof of 
Theorem \ref{main:theorem}
will be given in 
Section \ref{section:proof}.

Recall that a topological group is \emph{precompact} if it is a subgroup of some compact group.
Due to the importance of precompact groups in the theory of topological groups, Dikranjan and the first author introduced
in \cite{DS-OPIT2, DS_MZ} the ``precompact analogue'' of the Markov topology of a group $G$ called
its \emph{precompact Markov topology} $\mathfrak{P}_G$.
The $\mathfrak{P}_G$-closed subsets of $G$ are precisely those subsets of $G$ 
which are closed in every \emph{precompact} 
Hausdorff
group topology on $G$.
Clearly, $\mathfrak{P}_G$ is discrete if $G$ is a finite group. 
For an infinite group $G$, $\mathfrak{P}_G$ is non-discrete if and only if homomorphisms of $G$ into compact groups separate points of $G$, or equivalently, if $G$ is maximally almost periodic in its discrete topology (Corollary \ref{non-discretness:of:P_G}).
Obviously,
the inclusion $\mathfrak{M}_G\subseteq \mathfrak{P}_G$ 
always holds. The equality $\mathfrak{M}_G= \mathfrak{P}_G$ holds for every abelian group $G$ \cite{DS_MZ}
but fails already for permutation groups \cite{DT-2012}.
Our second main result shows that the equality $\mathfrak{M}_G= \mathfrak{P}_G$ fails also for free groups $G$ of rank at least $2$.

\begin{theorem}
\label{precompact:Markov}
$\mathfrak{M}_G\neq \mathfrak{P}_G$ for every free group $G$ of rank at least $2$.
\end{theorem}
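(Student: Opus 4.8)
The plan is to exhibit a single subset of $G$ that is $\mathfrak{P}_G$-closed but not algebraic; by Theorem~\ref{main:theorem} the latter is the same as being non-$\mathfrak{M}_G$-closed, and since $\mathfrak{M}_G\subseteq\mathfrak{P}_G$ this yields $\mathfrak{M}_G\neq\mathfrak{P}_G$. Fix a free generator $a$ of $G$ together with a second generator $b$ (this is where the rank hypothesis enters: for rank $1$ the group is abelian and $\mathfrak{M}_G=\mathfrak{P}_G$). Write $C=C_G(a)=\langle a\rangle$ and let $\pi\colon G\to G/C$ be the quotient onto the left coset space. The candidates will be sets $S=\pi^{-1}(\mathcal F)=\bigcup_{x\in\mathcal F}x$ for a suitable \emph{infinite and co-infinite} family $\mathcal F\subseteq G/C$. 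By Theorem~\ref{derived:from:Guba} such an $S$ is never algebraic: a single centralizer-coset contains at most one coset of $C$, so a finite union of singletons and centralizer-cosets can meet only finitely many cosets of $C$, whereas $S$ meets infinitely many and is not all of $G$. Thus non-$\mathfrak{M}_G$-closedness is automatic once $\mathcal F$ is infinite and co-infinite.

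The heart of the matter is to force $S$ to be closed in \emph{every} precompact Hausdorff group topology $\tau$. I would first reduce this to a statement on the coset space. Every such $\tau$ is the topology induced by the inclusion $\psi\colon G\hookrightarrow K$ of $G$ into its compact completion $K$; since $\psi$ is injective and $\overline{\langle\psi(a)\rangle}$ is abelian, any $g$ with $\psi(g)\in\overline{\langle\psi(a)\rangle}$ commutes with $a$, whence $\psi^{-1}\bigl(\overline{\langle\psi(a)\rangle}\bigr)=C$. Consequently $gC\mapsto\psi(g)\,\overline{\langle\psi(a)\rangle}$ is a well-defined injection $\iota_\psi\colon G/C\hookrightarrow K/\overline{\langle\psi(a)\rangle}$ into a compact homogeneous space, and, using that the bundle projection $K\to K/\overline{\langle\psi(a)\rangle}$ is proper, one computes $\overline{S}^{\tau}=\psi^{-1}\bigl(\overline{\psi(S)}\bigr)=\pi^{-1}\bigl(\iota_\psi^{-1}(\overline{\iota_\psi(\mathcal F)})\bigr)$. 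Hence $S$ is $\tau$-closed precisely when $\mathcal F$ is closed in the topology that $\iota_\psi$ induces on $G/C$. The problem therefore becomes: build an infinite, co-infinite $\mathcal F\subseteq G/C$ that is closed in the topology induced on $G/C$ by every compact completion of $G$.

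To construct such an $\mathcal F$ I would exploit that, unlike $G$ itself, the homogeneous space $G/C$ carries infinite subsets that stay discrete in every compactification: although $\iota_\psi(\mathcal F)$ must accumulate in the compact space $K/\overline{\langle\psi(a)\rangle}$, its limit points can be forced off the countable set $\iota_\psi(G/C)$, escaping into the remainder of the compactification. The natural device is to index $\mathcal F$ along the second generator — say by a sparse subsequence of $\{b^{n}C\}$ — and to track the induced sequence $\psi(b)^{n}\,\overline{\langle\psi(a)\rangle}$ in $K/\overline{\langle\psi(a)\rangle}$, using the behaviour of Bohr-convergent sequences to rule out that a limit of $\mathcal F$ is again a $C$-coset coming from $G$. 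Once no coset outside $\mathcal F$ arises as a limit in any compact quotient, $\mathcal F$ is closed in all induced topologies, so $S$ is $\mathfrak{P}_G$-closed.

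The main obstacle is exactly this last verification, which must hold \emph{simultaneously} for all precompact topologies, including those arising from non-abelian compact Lie quotients such as dense copies of $G$ in $SU(2)$, where word maps have dense image and naively produce many spurious limit cosets. Controlling, uniformly over all compact completions, which limits of $\mathcal F$ fall back onto cosets of $C$ in $G/C$ — and showing that a sufficiently sparse $\mathcal F$ has none outside itself — is where the Bohr-compactification and Bohr-convergence analysis carries the argument, and I expect it to be the technically decisive step.
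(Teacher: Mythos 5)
Your proposal is not a complete proof: the step you yourself call ``the technically decisive step'' --- producing an infinite, co-infinite family $\mathcal{F}$ of left cosets of $C=\langle a\rangle$ whose union is closed in \emph{every} precompact Hausdorff group topology on $G$ --- is never carried out, and you give no argument that such a family exists. Everything before that point (the passage to the coset space $G/C$, the identification $\psi^{-1}\bigl(\overline{\langle\psi(a)\rangle}\bigr)=C$, the formula for the closure) is only a reformulation of what must be proved; the entire burden of Theorem \ref{precompact:Markov} rests on the unproved step. There is also an error in the part you do argue: it is false that ``a finite union of singletons and centralizer-cosets can meet only finitely many cosets of $C$''. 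Already the centralizer-coset $C_G(b)=\langle b\rangle$ meets the cosets $b^nC$ for all $n\in\mathbb{Z}$, and these are pairwise distinct. The statement your non-algebraicity argument actually needs is a dichotomy: a centralizer-coset $a'C_G(b')c'$ either meets each left coset of $C$ in at most one point, or --- when $c'^{-1}C_G(b')c'$ and $\langle a\rangle$ intersect nontrivially, hence coincide, both being maximal cyclic subgroups of a free group --- it is itself a single left coset of $C$. Hence a finite union of singletons and centralizer-cosets can \emph{contain} only finitely many full cosets of $C$, which does rule out algebraicity of your $S$; this part is repairable, but the main gap is not.

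The idea you are missing, and which lets the paper dispose of all precompact topologies simultaneously (exactly the uniformity you could not obtain), is compactness. By Thom's theorem \cite[Proposition 3.7]{Thom}, the subgroup $H=F(\{x_0,x_1\})$ of $G$ contains a faithfully indexed sequence converging in $\mathfrak{B}_H$; let $D$ be this sequence together with its limit. Then $D$ is $\mathfrak{B}_H$-compact, and if $\mathscr{T}$ is any precompact Hausdorff group topology on $G$, its restriction $\mathscr{T}\restriction_H$ is a precompact group topology on $H$, hence coarser than $\mathfrak{B}_H$; therefore $D$ is $\mathscr{T}$-compact and, $\mathscr{T}$ being Hausdorff, $\mathscr{T}$-closed (this is Lemma \ref{Bohr:compacts}). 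So $D$ is $\mathfrak{P}_G$-closed with no analysis of completions at all --- no dense images in $SU(2)$, no limit cosets in a quotient of the completion. By contrast, for an infinite union of $C$-cosets there is no evidence that closedness in \emph{all} precompact topologies can be arranged, so your plan may be unrealizable rather than merely unfinished. The paper then avoids Theorem \ref{derived:from:Guba} altogether: by Theorem \ref{homeo:corollary}, $\mathfrak{P}_G$ induces on $D$ its compact Hausdorff topology, so $(G,\mathfrak{P}_G)$ has an infinite Hausdorff subspace, whereas $(G,\mathfrak{Z}_G)$ is Noetherian and thus has none; since $\mathfrak{Z}_G=\mathfrak{M}_G$ by Theorem \ref{main:theorem}, it follows that $\mathfrak{M}_G\neq\mathfrak{P}_G$.
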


The restriction on rank of the free group in Theorem \ref{precompact:Markov} is essential, as
$\mathfrak{M}_{\mathbb{Z}}= \mathfrak{P}_{\mathbb{Z}}$
for 
the free group $\mathbb{Z}$ of rank $1$; see \cite{DS_MZ}.
We note that the precompact Markov topology
$\mathfrak{P}_G$ is non-discrete for a non-trivial free group $G$
(Corollary \ref{non-discrete:precompact:topology:on:free:groups}(b)). 
To the best of the 
authors'
knowledge,
all examples of groups $G$ satisfying $\mathfrak{M}_G\neq \mathfrak{P}_G$  known so far had discrete precompact Markov topology $\mathfrak{P}_G$; see Remarks \ref{rem:11.1:h}(i) and \ref{rem:6.3}(ii).

The proof of 
Theorem \ref{precompact:Markov}
will be given in 
Section \ref{P_G:section}.
It relies on 
the existence of a non-trivial Bohr convergent sequence in the free group with 2 generators
due to Thom \cite{Thom}.

It seems worthwhile to point out that
our proofs of 
Theorems \ref{main:theorem}
and 
\ref{precompact:Markov}
do not depend on the description of the Zariski topology of free groups obtained in Theorem 
\ref{derived:from:Guba}.

Additional results on
$\mathfrak{Z}_G$,
$\mathfrak{M}_G$
and 
$\mathfrak{P}_G$
for a non-commutative group $G$ can be found in 
\cite{DT-2012, DT, {BDT}}.

The paper is organized as follows. 
In Section \ref{sec:MZ}, we deduce from Theorem \ref{main:theorem} that \emph{every} group is a quotient
of a group with coinciding Markov and Zariski topologies (Corollary \ref{universal:quotient}), thereby establishing that the class of groups with coinciding Markov and Zariski topologies is not closed
under taking quotient groups (Corollary \ref{corollary:1.4}).
A similar result holds for subgroups as well (Theorem \ref{every:group:is:a:quotient:of:MZ:group}).
The main result 
of
Section \ref{extension:section} 
is Theorem \ref{second:lemma}
which is obtained by combining 
Lemma \ref {first:lemma} on monomorphisms of a free group into a power of another free group
with the result of Slutsky \cite{Slutsky} on extension of Polish group topologies on two groups to a Polish group topology of their free product.
In turn, 
Theorem \ref{second:lemma}
is combined with Fact \ref{Hausdroff:embedding:theorem}
proved in 
\cite[Theorem 3.9]{DS_R} in order to establish Corollary \ref{main:thm:2}.
The latter theorem serves as a foundation for applying a corollary of a certain Reflection Principle from \cite{DS_R} 
stating
that Markov and Zariski topologies of a group $G$ 
coincide provided that the family of all at most countable subgroups of $G$ contains a closed unbounded subset 
consisting of Hausdorff embedded subgroups of $G$ (Corollary
\ref{Hausdorff:embedded:corollary}).
In Lemma \ref{natural:club}
we define
a natural closed unbounded subset $\mathcal{C}$ of the family 
of all at most countable subgroups of a free group $G$,
and
we apply Corollary \ref{main:thm:2}
to prove that all members of $\mathcal{C}$ are Hausdorff embedded in $G$, thereby deducing Theorem \ref{main:theorem} from Corollary
\ref{Hausdorff:embedded:corollary}.
In Section \ref{P_G:section} we establish connections between 
the precompact Markov topology $\mathfrak{P}_G$ of a group $G$ and its Bohr topology $\mathfrak{B}_G$.
The key result here is Theorem \ref{homeo:corollary}.
Its
Corollary \ref{the:same:topologies:on:Bohr:compact:subsets} states that if
a group $G$ is maximally almost periodic group in its discrete topology, then $\mathfrak{B}_G$ and $\mathfrak{P}_G$ induce the same (subspace) topology on every Bohr compact subset 
of $G$. 
Another Corollary \ref{sequences:in:P_G} combines Theorem \ref{homeo:corollary} with the result
of Thom \cite{Thom} on the existence of a non-trivial sequence in the free group with two generators converging to its identity element in the Bohr topology to conclude that 
a non-commutative free group $G$ has an infinite Hausdorff subspace in its precompact Markov topology $\mathfrak{P}_G$.
On the other hand, the Zariski topology $\mathfrak{Z}_G$ of a free group $G$ is known to be Noetherian \cite[Corollary 5.2]{DT-Ischia}, and Noetherian spaces have no infinite Hausdorff subspaces.
It follows that $\mathfrak{Z}_G\neq \mathfrak{P}_G$ for a non-commutative free group $G$. Combining this with our Theorem \ref{main:theorem}, we deduce Theorem \ref{precompact:Markov}.
In Section \ref{sec:11}, we introduce two new classes of groups related to the precompact Markov topology and formulate
a series of questions 
related to them.

Thereafter, the symbol $F(X)$ denotes the free 
group over a set $X$. 

\section{The class $\mathcal{MZ}$ of groups with coinciding Markov and Zariski topologies}
\label{sec:MZ}

Following \cite{{DS_R}, DT-Ischia}, we denote by $\mathcal{MZ}$ the class of groups for which Markov and Zariski topologies coincide.
As a corollary of Theorem 
\ref{main:theorem}, 
we can
show
 that the quotients of groups from this class generate the class of all groups.

\medskip
\begin{corollary}
\label{universal:quotient}
Every group is a quotient group of some group 
from
the class $\mathcal{MZ}$.
\end{corollary}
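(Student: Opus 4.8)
The plan is to reduce the statement to Theorem \ref{main:theorem} via the most classical representation of an arbitrary group as a quotient. Recall that for any group $G$ one may choose a generating set $X \subseteq G$ (for instance $X = G$, the whole underlying set) and form the free group $F(X)$ over $X$. The inclusion map $X \hookrightarrow G$ then extends, by the universal property of free groups, to a unique homomorphism $\varphi \colon F(X) \to G$. Since $X$ generates $G$, the image of $\varphi$ is all of $G$, so $\varphi$ is surjective and exhibits $G$ as a quotient group $F(X)/\ker\varphi$.

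The second ingredient is Theorem \ref{main:theorem}, which asserts that Markov and Zariski topologies coincide on every free group, i.e. that $F(X) \in \mathcal{MZ}$. Combining the two observations, $G$ is a quotient of the group $F(X)$, which belongs to $\mathcal{MZ}$, and this is exactly the assertion of the corollary. I would phrase the argument in precisely this order: first produce the surjection from a free group, then invoke Theorem \ref{main:theorem} to place the domain in $\mathcal{MZ}$.

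There is essentially no serious obstacle here, since the result is an immediate consequence of Theorem \ref{main:theorem}. The only point requiring a moment of care is that the free group $F(X)$ used in the reduction may have arbitrary (possibly infinite) rank, as well as rank one when $G$ is cyclic; one must therefore make sure that Theorem \ref{main:theorem} is understood to apply to free groups of \emph{every} rank, and not merely to the non-commutative ones. Since the rank-one case is the group $\mathbb{Z}$, for which the equality $\mathfrak{Z}_G = \mathfrak{M}_G$ is already classical, no difficulty arises. I would keep the written proof to two or three sentences, emphasizing the surjection $F(X) \to G$ and the membership $F(X) \in \mathcal{MZ}$.
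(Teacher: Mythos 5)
Your proposal is correct and is essentially the paper's own proof: the paper also takes the free group over the underlying set of the given group (its choice of generating set is $X=G$ itself), extends the identity map to a surjection $F(X)\to G$, and cites Theorem~\ref{main:theorem} for $F(X)\in\mathcal{MZ}$. Your side remark about ranks is harmless but unnecessary, since Theorem~\ref{main:theorem} (and its proof in Section~\ref{section:proof}) is stated for $F(X)$ over an arbitrary set $X$, so it covers free groups of every rank, including the abelian cases.
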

\begin{proof}
Let $H$ be an arbitrary group, and let $G$ be the free group $F(H)$ over the alphabet set $H$. Then the identity map $i:H\to H$ can be extended to a homomorphism $f: G\to H$ between $G=F(H)$ and $H$. Since $i$ is surjective, so is $f$. Therefore,
$H$ is a quotient group of $G$. Since $G$ is a free group, it belongs to the class $\mathcal{MZ}$  by Theorem \ref{main:theorem}.
\end{proof}

\begin{corollary}
\label{corollary:1.4}
A quotient 
group of a group 
from
the class $\mathcal{MZ}$ need not belong to the class $\mathcal{MZ}$.
\end{corollary}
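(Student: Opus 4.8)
The plan is to combine the preceding Corollary \ref{universal:quotient} with the existence of a single group lying outside $\mathcal{MZ}$. Since Corollary \ref{universal:quotient} guarantees that \emph{every} group is a quotient of some group in $\mathcal{MZ}$, it suffices to produce one group $G$ with $\mathfrak{Z}_G \neq \mathfrak{M}_G$: such a $G$ is then simultaneously a quotient of a member of $\mathcal{MZ}$ and not itself a member of $\mathcal{MZ}$, which is precisely the assertion to be proved.

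First I would take $G$ to be Hesse's group $G_\kappa$ from Example \ref{exa:Hesse}, which satisfies $\mathfrak{Z}_{G_\kappa} \neq \mathfrak{M}_{G_\kappa}$ and hence does not belong to $\mathcal{MZ}$. Applying Corollary \ref{universal:quotient} to $H = G_\kappa$ then realizes $G_\kappa$ as a quotient of the free group $F(G_\kappa)$, and $F(G_\kappa)$ lies in $\mathcal{MZ}$ by Theorem \ref{main:theorem}. This already establishes the corollary.

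I do not expect any genuine obstacle at this stage, since all the substantive content has been absorbed into the two cited results: Theorem \ref{main:theorem}, which places every free group in $\mathcal{MZ}$, and Hesse's construction, which supplies a group on which the Markov and Zariski topologies truly differ. The only point worth recording is that the statement is purely existential, so a single witness is enough; Hesse's $G_\kappa$ furnishes such a witness in ZFC. (Under CH one could instead appeal to Sipacheva's modification of Shelah's non-topologizable group, but the ZFC example is the cleaner choice.)
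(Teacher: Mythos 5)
Your proposal is correct and matches the paper's own proof: both take Hesse's group from Example \ref{exa:Hesse} as the witness outside $\mathcal{MZ}$ and realize it as a quotient of a group in $\mathcal{MZ}$ via Corollary \ref{universal:quotient}.
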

\begin{proof}
Let $H$ be  
the group 
from Example \ref{exa:Hesse}.
Then $H\notin \mathcal{MZ}$. On the other hand, $H$ is a quotient group of some group $G\in \mathcal{MZ}$ by Corollary \ref{universal:quotient}.
\end{proof}

A result similar to Corollary \ref{universal:quotient} holds for subgroups as well.
\begin{theorem}
\label{every:group:is:a:quotient:of:MZ:group}
Every group  is a subgroup of some group in the class $\mathcal{MZ}$.
\end{theorem}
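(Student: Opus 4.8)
The plan is to embed an arbitrary group $H$ into a symmetric group and then invoke the Banakh--Guran--Protasov theorem \cite{BGP} that symmetric groups belong to $\mathcal{MZ}$. Note that one cannot simply transcribe the argument of Corollary \ref{universal:quotient} by embedding $H$ into a free group: subgroups of free groups are themselves free, so already no nontrivial finite group (nor Hesse's group from Example \ref{exa:Hesse}) can embed into a free group. Hence the statement genuinely requires a different supply of members of $\mathcal{MZ}$, and the full symmetric groups produced by \cite{BGP} are the natural choice to play the role that free groups played for quotients.

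Concretely, first I would pass to the full symmetric group $\mathrm{Sym}(H)$ on the underlying set of $H$. By Cayley's theorem the left regular representation $\lambda\colon H\to \mathrm{Sym}(H)$, sending each $h\in H$ to the permutation $x\mapsto hx$, is an injective group homomorphism, so $H$ is isomorphic to a subgroup of $\mathrm{Sym}(H)$. Second, I would apply \cite{BGP}, according to which the symmetric (permutation) group $\mathrm{Sym}(H)$ has coinciding Markov and Zariski topologies, i.e. $\mathrm{Sym}(H)\in\mathcal{MZ}$. Chaining these two steps exhibits $H$ as a subgroup of a group in $\mathcal{MZ}$, which is precisely the assertion of the theorem.

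In this form the argument carries essentially no internal obstacle: the embedding $H\hookrightarrow \mathrm{Sym}(H)$ is the classical Cayley embedding, valid for a group of any cardinality, and the membership $\mathrm{Sym}(H)\in\mathcal{MZ}$ is quoted verbatim from \cite{BGP}. In particular, unlike Corollary \ref{universal:quotient}, this result does not actually invoke our Theorem \ref{main:theorem}; its only nontrivial ingredient is the external theorem on permutation groups. Were one to demand a self-contained proof that avoided citing \cite{BGP}, the entire difficulty would migrate into re-establishing that $\mathrm{Sym}(H)$ has coinciding Markov and Zariski topologies, which is exactly the (far from routine) content of \cite{BGP}; so I would expect the write-up to remain a short corollary and to leave that hard analysis inside the cited result.
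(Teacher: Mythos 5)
Your proof is correct and is essentially identical to the paper's own argument: both use the Cayley embedding of $H$ into the symmetric group $S(H)$ and then cite the Banakh--Guran--Protasov theorem that permutation groups lie in $\mathcal{MZ}$. Your remark explaining why free groups cannot serve here (subgroups of free groups are free) is accurate but not needed for the proof itself.
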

\begin{proof}
The proof is essentially the same as that of \cite[Corollary 3.2]{BGP}. Indeed,
every group $G$ is isomorphic to a 
subgroup of the symmetric group $S(G)$ over the set $G$
by Cayley's theorem,
and
$S(G)$ belongs to the class $\mathcal{MZ}$
by 
\cite[Corollary 3.1]{BGP}.
\end{proof}

Since Hesse's 
Example \ref{exa:Hesse} 
does not belong to the class $\mathcal{MZ}$, one derives the analog of Corollary \ref{corollary:1.4} for subgroups obtained by Banakh, Guran and Protasov:
\begin{corollary}
\label{subgroup:corollary}
{\rm (\cite[Corollary 3.2]{BGP})}
A subgroup of a group from the class $\mathcal{MZ}$ need not belong to the class $\mathcal{MZ}$.
\end{corollary}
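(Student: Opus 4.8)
The plan is to combine the two ingredients already assembled in this section: Hesse's group, which lies outside $\mathcal{MZ}$, and the embedding theorem asserting that every group sits inside some member of $\mathcal{MZ}$. This mirrors exactly the argument used for the quotient version in Corollary \ref{corollary:1.4}, with Theorem \ref{every:group:is:a:quotient:of:MZ:group} now playing the role that Corollary \ref{universal:quotient} played there.

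First I would take $H$ to be the group $G_\kappa$ from Example \ref{exa:Hesse}. By that example, $\mathfrak{Z}_H$ is non-discrete while $\mathfrak{M}_H$ is discrete, so $\mathfrak{Z}_H \neq \mathfrak{M}_H$ and hence $H \notin \mathcal{MZ}$. Next I would invoke Theorem \ref{every:group:is:a:quotient:of:MZ:group} to realize $H$ as a subgroup of some group $G \in \mathcal{MZ}$. The pair $(G,H)$ then witnesses the assertion: $G$ belongs to the class $\mathcal{MZ}$, yet its subgroup $H$ does not. This completes the proof.

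There is no genuine obstacle at this stage, since all the substantive work has been discharged earlier. The non-triviality is entirely contained in Theorem \ref{every:group:is:a:quotient:of:MZ:group}, which rests on Cayley's embedding of $H$ into the symmetric group $S(H)$ together with the Banakh--Guran--Protasov fact that symmetric groups belong to $\mathcal{MZ}$, and in the existence of Hesse's non-topologizable group with non-discrete Zariski topology. Once these two facts are granted, the corollary follows immediately.
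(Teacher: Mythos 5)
Your proof is correct and follows exactly the paper's route: the paper likewise derives this corollary by combining Hesse's group (Example \ref{exa:Hesse}), which lies outside $\mathcal{MZ}$, with Theorem \ref{every:group:is:a:quotient:of:MZ:group} (Cayley embedding into $S(H)$ plus the Banakh--Guran--Protasov result that symmetric groups belong to $\mathcal{MZ}$). Nothing is missing.
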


\section{Extension of separable metric group topologies in free groups}
\label{extension:section}

\begin{fact}{\cite[Theorem 3.8]{Slutsky}}  \label{slutsky}
Let $G$ and $H$ be 
Polish
groups. Then there exist a Polish group $T$ and topological group embeddings $\psi_G: G \to T$, $\psi_H: H \to T$ such that the group 
generated by $\psi_G(G)\cup\psi_H(H)$
is naturally isomorphic to the free product 
$G \wprod H$ of $G$ and $H$. 
\end{fact}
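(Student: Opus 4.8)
The plan is to realize the free product $G\wprod H$ as a dense subgroup of $T$ by equipping it directly with a compatible left-invariant metric of Graev type and then passing to a group completion. Since the factors need not be SIN groups, the metric cannot be chosen two-sided invariant, so some care is required at the completion step; this is the main structural decision driving the proof.

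First I would fix, by the Birkhoff--Kakutani theorem, left-invariant metrics $d_G$ and $d_H$, each bounded by $1$, generating the Polish topologies of $G$ and $H$. Viewing every element of the algebraic free product $G\wprod H$ as a reduced alternating word in the nontrivial elements of the two factors, I would define a norm $N(w)$ as the infimum, over all ways of writing $w$ as a product of letters together with an admissible non-crossing matching of the positions, of the total cost, where two matched letters lying in a common factor are charged that factor's distance and cancellations are accounted for in the bookkeeping. Setting $\rho(u,v)=N(u^{-1}v)$ produces a left-invariant pseudometric: symmetry and the triangle inequality follow formally by reflecting and concatenating matchings, and one checks in the same way that multiplication and inversion are continuous, so that $(G\wprod H,\rho)$ is a topological group.

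Next I would verify the two properties that pin down the topology on the factors. The easy inequalities $\rho\le d_G$ on $G$ and $\rho\le d_H$ on $H$ are immediate from the definition (charge a single-letter word against the basepoint), while the reverse inequalities, together with the nondegeneracy statement $N(w)=0\Rightarrow w=1$, establish that $\rho$ is a genuine metric restricting to $d_G$ and $d_H$ on the factors; in particular the inclusions of $G$ and $H$ into $(G\wprod H,\rho)$ are topological embeddings. Finally I would let $T$ be the two-sided (Raikov) completion of the topological group $(G\wprod H,\rho)$. This completion is always a topological group, it is metrizable and complete, and it is separable because reduced words over countable dense subsets of $G$ and $H$ form a dense set; hence $T$ is Polish. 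The composites $\psi_G,\psi_H$ of the factor inclusions with $G\wprod H\hookrightarrow T$ are then topological embeddings, and by construction the subgroup they generate is the dense copy of $G\wprod H$, naturally isomorphic to the free product.

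The main obstacle is the nondegeneracy of the Graev-type metric, namely the reverse inequalities $\rho\ge d_G$ and $\rho\ge d_H$ on the factors and the implication $N(w)=0\Rightarrow w=1$ for reduced words $w\neq 1$. This is the delicate combinatorial core of every Graev construction: one must show that the infimum over admissible matchings cannot cheaply simulate the algebraic cancellation available in the free product, by analyzing how non-crossing matchings interact with the normal form and by invoking, within each factor, a lower bound on the distance of a nontrivial element from the identity. The bipartite, two-factor setting makes this bookkeeping heavier than in the single free-group case, and it is precisely here that the argument must be carried out with full rigor.
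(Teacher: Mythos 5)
The paper itself contains no proof of this Fact: it is quoted directly from Slutsky's article \cite[Theorem 3.8]{Slutsky}, so your attempt has to be measured against Slutsky's own argument. Your route --- a Graev-type metric on the algebraic free product followed by completion --- is indeed the engine of that article, but Slutsky runs the Graev construction only for factors carrying \emph{two-sided} invariant metrics; the passage to arbitrary Polish groups is precisely the nontrivial content of the theorem, and it cannot be reached the way you propose.

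The gap is structural and occurs at the step you label as easy, not in the ``delicate combinatorial core'' you postpone to the end. A norm defined as an infimum of letterwise costs over non-crossing matchings is automatically invariant under conjugation: given any representation of $w$ with a matching of cost $c$, surround it by the letters of $v$ and of $v^{-1}$ and match each letter $v_i$ with its literal inverse $v_i^{-1}$; each such pair is charged $d(v_i,(v_i^{-1})^{-1})=d(v_i,v_i)=0$ in Graev's bookkeeping, and these pairs nest around the old matching, so they cross nothing. Hence $N(vwv^{-1})\le N(w)$, and applying the same inequality with $v^{-1}$ in place of $v$ gives $N(vwv^{-1})=N(w)$ for all $v,w\in G\wprod H$. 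The balls $\{w\colon N(w)<\varepsilon\}$ are therefore conjugation-invariant, so the metric group you build --- and its Raikov completion $T$ --- is a SIN group. Since the SIN property passes to subgroups, no Polish group that fails to be SIN (for example $S_\infty$, $\mathrm{Homeo}([0,1])$, or the isometry group of the Urysohn space) can embed topologically into such a $T$. In your outline this surfaces as the failure of the ``reverse inequality'' $\rho\ge d_G$ on $G$: if $G$ is not SIN, there are $c>0$, elements $g_n'\to e$ and $h_n\in G$ with $d_G(e,h_n^{-1}g_n'h_n)\ge c$, while conjugation invariance together with your easy inequality gives $N(h_n^{-1}g_n'h_n)=N(g_n')\le d_G(e,g_n')\to 0$. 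So that inequality is not delicate; it is false. You do flag the SIN problem yourself, but you misplace it at the completion stage, which is in fact harmless (the Raikov completion of a separable metrizable group is Polish); the obstruction kills the construction of the metric itself. This is exactly why the theorem for general Polish groups requires, as in Slutsky's paper, a construction that is \emph{not} conjugation-invariant, rather than a rerun of Graev's construction with Birkhoff--Kakutani left-invariant metrics.
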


\begin{corollary}
\label{corollary:from:Slytsky}
If $A$ and $B$ are separable metric groups, then there exists a separable metric group topology on their free product $A\wprod B$ which induces 
the original topologies on $A$ and $B$.
\end{corollary}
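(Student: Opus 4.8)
The plan is to reduce the statement to the Polish case covered by Fact~\ref{slutsky} by passing to completions. First I would let $\tilde A$ and $\tilde B$ denote the two-sided (Raikov) completions of $A$ and $B$, respectively. Each is a complete metrizable topological group containing the corresponding original group as a dense topological subgroup. Since $A$ is separable and dense in $\tilde A$, the completion $\tilde A$ is separable; being also completely metrizable, it is a Polish group. The same applies to $\tilde B$, so we obtain topological subgroup inclusions $A\le\tilde A$ and $B\le\tilde B$ with $\tilde A$, $\tilde B$ Polish.

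Next I would apply Fact~\ref{slutsky} to the Polish groups $\tilde A$ and $\tilde B$. This produces a Polish group $T$ together with topological group embeddings $\psi_{\tilde A}\colon \tilde A\to T$ and $\psi_{\tilde B}\colon \tilde B\to T$ whose images generate a subgroup of $T$ naturally isomorphic to $\tilde A\wprod \tilde B$. Restricting these embeddings to the subgroups $A$ and $B$ gives topological embeddings of $A$ and $B$ into $T$ that induce their original topologies, since a topological embedding restricts to a topological embedding on any subgroup.

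Now consider the abstract subgroup $S=\grp{\psi_{\tilde A}(A)\cup\psi_{\tilde B}(B)}$ of $T$, endowed with the subspace topology inherited from $T$. A subgroup of a topological group is a topological group, and a subspace of a separable metrizable space is separable metrizable; hence $S$ carries a separable metric group topology. It remains to identify $S$, as an abstract group, with the free product $A\wprod B$ so that the factors $A$ and $B$ correspond to $\psi_{\tilde A}(A)$ and $\psi_{\tilde B}(B)$. This rests on the standard fact that for subgroups $A\le \tilde A$ and $B\le \tilde B$ the natural homomorphism $A\wprod B\to \tilde A\wprod \tilde B$ induced by the inclusions is injective: a reduced word over $A\cup B$ stays reduced, hence nontrivial, over $\tilde A\cup \tilde B$ by the normal form theorem for free products. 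Its image is exactly the subgroup generated by $A$ and $B$ inside $\tilde A\wprod \tilde B$, which under the isomorphism $\tilde A\wprod \tilde B\cong\grp{\psi_{\tilde A}(\tilde A)\cup\psi_{\tilde B}(\tilde B)}$ is carried onto $S$. Under this identification the subspace topology on $S$ restricts on the factor $A$ to the subspace topology from $T$, which we already noted equals the original topology of $A$, and likewise for $B$. Thus the subspace topology on $S\cong A\wprod B$ is the desired separable metric group topology.

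The only genuinely nonroutine ingredient is Fact~\ref{slutsky} itself, which we are free to invoke; everything else is the completion reduction together with elementary permanence properties of separable metrizability and of free-product embeddings. The step deserving a moment's care is verifying that the single subspace topology on $S$ induces the given topologies on both factors simultaneously, but this is immediate once one observes that each of $\psi_{\tilde A}$ and $\psi_{\tilde B}$ is a topological embedding whose restriction to the respective dense subgroup recovers its original topology.
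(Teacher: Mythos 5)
Your proof is correct and follows essentially the same route as the paper: pass to the (Raikov) completions to get Polish groups, apply Fact~\ref{slutsky}, and take the subgroup of $T$ generated by the images of $A$ and $B$ with its subspace topology. The only difference is that you spell out the normal-form argument showing that the subgroup of $\tilde A\wprod\tilde B$ generated by $A\cup B$ is naturally isomorphic to $A\wprod B$, a step the paper's proof treats as immediate.
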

\begin{proof}
Let $G$ and $H$ denote the completions of $A$ and $B$, respectively. Then $G$ and $H$ are Polish groups.
Applying Fact \ref{slutsky} to $G$ and $H$, we can find a Polish group $T$ and topological embeddings $\psi_G: G \to T$, $\psi_H: H \to T$
as in the conclusion of this theorem. Since 
$A\subseteq G$, $B\subseteq H$ and
the group 
generated by $\psi_G(G)\cup\psi_H(H)$
is naturally isomorphic to the free product 
$G \wprod H$ of $G$ and $H$, it follows that the 
group 
$N$
generated by $\psi_G(A)\cup\psi_H(B)$
is naturally isomorphic to the free product 
$A \wprod B$ of $A$ and $B$.
Now the subspace topology induced by $T$ on 
$N$
is a separable metric group topology
inducing the original topologies 
on $A$ and $B$.
\end{proof}

Recall that a {\em monomorphism} is a homomorphism with 
trivial kernel, or equivalently, an injective homomorphism.

\begin{lemma}
\label{first:lemma}
Let 
$X$ be a non-empty set and 
$Y$ be an infinite set.
Then 
there exist a non-empty set 
$G$
and a  
monomorphism
$\varphi: F(X)\to F(Y)^{G}$ 
such that 
$\pi_g\circ \varphi (x)=x$ 
whenever $x\in X\cap Y$
and $g\in G$, 
where 
$\pi_g:
F(Y)^G\to F(Y)$ is the projection on the coordinate $g$. 
\end{lemma}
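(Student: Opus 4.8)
The plan is to recognize that a homomorphism $\varphi\colon F(X)\to F(Y)^{G}$ amounts to a family $(\phi_g)_{g\in G}$ of homomorphisms $\phi_g=\pi_g\circ\varphi\colon F(X)\to F(Y)$, one per coordinate, and that $\varphi$ is injective precisely when $\bigcap_{g\in G}\ker\phi_g=\{1\}$, i.e.\ the family $(\phi_g)$ separates the identity from every other element of $F(X)$. The required coordinate condition $\pi_g\circ\varphi(x)=x$ for $x\in X\cap Y$ translates into the demand that every $\phi_g$ fix $X\cap Y$ pointwise, reading each $x\in X\cap Y$ as the corresponding generator of $F(Y)$. So it suffices to produce, for each nontrivial $w\in F(X)$, a single homomorphism $\phi_w\colon F(X)\to F(Y)$ that fixes $X\cap Y$ pointwise and satisfies $\phi_w(w)\neq 1$; taking $G$ to be the set of nontrivial elements of $F(X)$ and $\varphi=(\phi_w)_{w\in G}$ then does the job.

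To build $\phi_w$ for a fixed nontrivial reduced word $w$, let $S\subseteq X$ be the finite set of generators occurring in $w$. I would send every $x\in X\cap Y$ to itself, choose for the finitely many generators in $S\setminus Y$ pairwise distinct \emph{fresh} letters from $Y\setminus S$ (available because $Y$ is infinite while $S$ is finite), and send the remaining generators of $X$ to $1$. By construction $\phi_w$ fixes $X\cap Y$ pointwise, and its restriction to the free factor $\langle S\rangle\cong F(S)$ carries the basis $S$ bijectively onto a set of distinct elements of the basis $Y$; hence this restriction is an isomorphism onto the free factor $\langle\phi_w(S)\rangle$ of $F(Y)$ and in particular is injective. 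Since $w\in\langle S\rangle$ is nontrivial, $\phi_w(w)\neq 1$, as required.

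Finally I would assemble $\varphi=(\phi_w)_{w\in F(X)\setminus\{1\}}$ and verify the three claims: the set $G=F(X)\setminus\{1\}$ is non-empty because $X\neq\emptyset$ forces $F(X)\neq\{1\}$; the coordinate condition holds since $\pi_w(\varphi(x))=\phi_w(x)=x$ for every $x\in X\cap Y$; and $\varphi$ is a monomorphism because for each nontrivial $v\in F(X)$ the $v$-th coordinate gives $\pi_v(\varphi(v))=\phi_v(v)\neq 1$, whence $\varphi(v)\neq 1$. The one point that genuinely requires care---and the reason a single target copy of $F(Y)$ will not suffice---is cardinality: when $|X|>|Y|$ the group $F(X)$ is too large to embed into $F(Y)$ at all, so injectivity must be arranged coordinate by coordinate. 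This is exactly what the local construction accomplishes, since separating any one nontrivial element only ever involves the finitely many generators appearing in it, for which fresh letters of the infinite set $Y$ are always available.
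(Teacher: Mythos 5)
Your proof is correct and follows essentially the same route as the paper: you take $G=F(X)\setminus\{1\}$, construct for each nontrivial $w$ a coordinate homomorphism fixing $X\cap Y$ pointwise and sending $w$ to a nontrivial element (the paper verifies this via preservation of irreducible words, you via the standard fact that an injection of the basis $S$ into the basis $Y$ induces an embedding), and assemble these into the diagonal map into the power. The only differences are cosmetic, such as sending the generators outside $S\cup Y$ to $1$ rather than into $Y$.
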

\begin{proof}
The following claim is the key to our proof.
\begin{claim}
\label{new:claim}
For every $g\in F(X)\setminus\{e\}$ there exists a homomorphism
$\varphi_g:F(X)\to F(Y)$ satisfying the following two conditions:
\begin{itemize}
\item[(i)] $\varphi_g(g)\neq e$,
\item[(ii)] $\varphi_g(x)=x$ for every $x\in X\cap Y$.
\end{itemize}
\end{claim}
\begin{proof}
Fix $g\in F(X)\setminus\{e\}$.
Since $g\neq e$, there exist a positive integer $n$, elements
$x_1,\dots,x_n\in X$ and numbers $\varepsilon_1,\dots,\varepsilon_n\in\{-1,1\}$ such that 
\begin{equation}
\label{representation:of:g}
g=x_1^{\varepsilon_1} x_2^{\varepsilon_2}\dots x_n^{\varepsilon_n}
\end{equation}
is an irreducible representation of $g$; that is,
$\varepsilon_j\neq -\varepsilon_{j+1}$ whenever $j=1,\dots,n-1$
and $x_j=x_{j+1}$.
Let 
\begin{equation}
\label{def:of:E}
E=\{x_1,x_2,\dots,x_n\}.
\end{equation}
Since $E$ is a finite subset of $X$ and $Y$ is infinite,
one can fix a map $f:X\to Y$ such that
\begin{itemize}
\item[(a)]
$f \restriction_E$
is an injection, 
and 
\item[(b)]
$f(x)=x$ for each $x\in X\cap Y$.
\end{itemize}
Let $\varphi_g: F(X)\to F(Y)$ be the homomorphism extending 
$f$,
i.e. satisfying
\begin{itemize}
\item[(c)]
$\varphi_g\restriction_X=f$.
\end{itemize}
Let $y_j=f(x_j)$ for $j=1,\dots,n$.
Then $y_1,y_2,\dots,y_n\in Y$.
Since $\varphi_g$ is a homomorphism, it follows from (c) that
\begin{equation}
\label{eq:varphi:of:g}
\varphi_g(g)=y_1^{\varepsilon_1} y_2^{\varepsilon_2}\dots y_n^{\varepsilon_n}.
\end{equation}
Since \eqref{representation:of:g} is an irreducible
representation of $g$, 
it follows from \eqref{def:of:E}, (a) and the definition of $y_j$
for  $j=1,\dots,n$ that \eqref{eq:varphi:of:g}
is an irreducible
representation of $\varphi_g(g)$. In particular, $\varphi_g(g)\neq e$. This establishes (i).

(ii) follows from (b) and (c).
\end{proof}

Let $G=F(X)\setminus\{e\}$. Since $X\neq\emptyset$, 
$F(X)$ is non-trivial, and so $G\neq\emptyset$.
For each $g\in G$, choose a homomorphism $\varphi_g$ as in the conclusion of Claim \ref{new:claim}.
Let $\varphi: F(X)\to F(Y)^G$ be the unique homomorphism
such that
\begin{equation}
\label{eq:6:b}
\pi_g\circ \varphi=\varphi_g
\ \ 
\text{ for all }
g\in G.
\end{equation}

\begin{claim}
$\varphi$ is a monomorphism.
\end{claim}
\begin{proof}
Suppose that $g\in F(X)$ and $g\neq e$. Then $g\in G$
and 
\begin{equation}
\label{eq:5:d}
\pi_g(\varphi(g))=\pi_g\circ \varphi(g)=\varphi_g(g)\neq e
\end{equation}
by \eqref{eq:6:b} and item (i) of Claim \ref{new:claim}.
Since $\pi_g$ is a homomorphism, \eqref{eq:5:d} implies $\varphi(g)\neq e$.
We have proved that the kernel of $\varphi$ is trivial.
\end{proof}

Suppose that $x\in X\cap Y$
and $g\in G$.
Then 
$\pi_g\circ \varphi (x)=\varphi_g(x)=x$
by \eqref{eq:6:b} and 
 item (ii) of Claim \ref{new:claim}.
\end{proof}

Recall that for every subset $Z$ of a set $X$, the subgroup of $F(X)$ generated by $Z$ coincides with $F(Z)$.
In what follows, we shall always identify $F(Z)$ with this subgroup of $F(X)$.

\begin{theorem}
\label{second:lemma}
Let $X$ be a set and $Z$ be its subset. Then for every separable metric group topology $\mathscr{T}$ on $F(Z)$ 
there exists 
a Hausdorff group topology $\mathscr{T}'$ on $F(X)$ inducing the original topology $\mathscr{T}$ on $F(Z)$;
that is, $(F(Z),\mathscr{T})$ is a subspace of 
$(F(X),\mathscr{T}')$.
\end{theorem}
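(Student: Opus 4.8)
The plan is to build $\mathscr{T}'$ not as a free product of topologies, but as an \emph{initial} (pullback) topology along a monomorphism of $F(X)$ into a large power of a separable metric free group, exploiting Lemma \ref{first:lemma}. We may assume $X\neq\emptyset$, since otherwise $F(X)$ is trivial and the statement is vacuous. The reason for passing to a power rather than proceeding directly is a cardinality obstruction: the naive idea of writing $F(X)=F(Z)\wprod F(X\setminus Z)$ and applying Corollary \ref{corollary:from:Slytsky} to combine $\mathscr{T}$ with a separable metric topology on $F(X\setminus Z)$ breaks down as soon as $X\setminus Z$ is large, because a separable metric group has cardinality at most $\mathfrak{c}$ and so $F(X\setminus Z)$ need carry no such topology at all. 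Embedding $F(X)$ into a power $F(Y)^{G}$ sidesteps this, since powers may be arbitrarily large while each coordinate factor $F(Y)$ stays separable metric.

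First I would fix a countably infinite set $W$ disjoint from $X$ and set $Y=Z\cup W$, so that $Y$ is infinite and $X\cap Y=Z$. Identifying $F(Y)$ with $F(Z)\wprod F(W)$ and applying Corollary \ref{corollary:from:Slytsky} to $(F(Z),\mathscr{T})$ and to $F(W)$ equipped with its discrete (hence separable metric) topology, I obtain a separable metric group topology $\sigma$ on $F(Y)$ whose restriction to $F(Z)$ is exactly $\mathscr{T}$. In particular $\sigma$ is Hausdorff.

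Next I would apply Lemma \ref{first:lemma} to this pair $X,Y$, producing a non-empty set $G$ and a monomorphism $\varphi:F(X)\to F(Y)^{G}$ with $\pi_g\circ\varphi(x)=x$ for every $x\in X\cap Y=Z$ and every $g\in G$. Because $\varphi$ is a homomorphism and $Z$ generates $F(Z)\le F(Y)$, this forces $\varphi(w)=(w)_{g\in G}$ for all $w\in F(Z)$; that is, $\varphi$ carries $F(Z)$ isomorphically onto the diagonal of $F(Y)^{G}$. I then equip $F(Y)^{G}$ with the product topology induced by $\sigma$ (Hausdorff, as $\sigma$ is) and define $\mathscr{T}'$ to be the coarsest topology on $F(X)$ making $\varphi$ continuous, i.e.\ the topology for which $\varphi$ is a topological embedding. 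As a pullback of a group topology along a homomorphism, $\mathscr{T}'$ is a group topology, and it is Hausdorff since $\varphi$ is injective and its target is Hausdorff.

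It remains to verify $\mathscr{T}'\!\restriction_{F(Z)}=\mathscr{T}$, which I regard as the crux. Here $\varphi\!\restriction_{F(Z)}$ is the diagonal embedding into $F(Y)^{G}$, and the subspace topology that a product induces on its diagonal coincides with the factor topology; a direct inspection of basic open boxes shows that the trace on $\varphi(F(Z))$ is precisely the image of $\sigma\!\restriction_{F(Z)}=\mathscr{T}$. Pulling this back along the embedding $\varphi\!\restriction_{F(Z)}$ yields $\mathscr{T}'\!\restriction_{F(Z)}=\mathscr{T}$, as desired; note that the degenerate subcase $Z=\emptyset$ is already covered by the same construction. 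I expect the main obstacle to be conceptual rather than computational: the decisive point is recognizing that the coordinate-fixing clause of Lemma \ref{first:lemma} turns $\varphi$ into a \emph{diagonal} map on $F(Z)$, which is exactly what makes the otherwise topology-distorting passage to a large power faithful on the subgroup $F(Z)$, so that the large power can absorb an arbitrarily large $X$ without perturbing the prescribed topology on $F(Z)$.
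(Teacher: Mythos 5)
Your proof is correct and follows essentially the same route as the paper's: both extend $\mathscr{T}$ to a separable metric group topology on $F(Y)=F(Z)\wprod F(Y\setminus X)$ via Corollary \ref{corollary:from:Slytsky}, then define $\mathscr{T}'$ as the pullback of the product topology on $F(Y)^{G}$ along the monomorphism $\varphi$ of Lemma \ref{first:lemma}. Your diagonal observation is exactly the paper's argument that $\pi_g\circ\varphi$ restricts to the identity on $F(Z)$ for every $g\in G$, making $\varphi\restriction_{F(Z)}$ a homeomorphic embedding and hence the trace of $\mathscr{T}'$ on $F(Z)$ equal to $\mathscr{T}$.
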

\begin{proof}
Without loss of generality, we may assume that $X$ is non-empty.
Fix a set $Y$ such that $Y\cap X=Z$ and $Y\setminus X$ is countably infinite. Note that $F(Y)=F(Z)\wprod F(Y\setminus X)$.
Let $A$ be the group $F(Z)$ equipped with the topology $\mathscr{T}$, 
and let $B$ be the group 
$F(Y\setminus X)$ equipped with the discrete topology.
Since $Y\setminus X$ is countably infinite, $F(Y\setminus X)$ is a countable group, so $B$ is a separable metric group.
Use Corollary \ref{corollary:from:Slytsky}
to fix a separable metric group topology $\mathscr{T}^*$ on $F(Y)=A\wprod B$ which induces the original topologies on $A$ and $B$. In particular, $(F(Z),\mathscr{T})$ is a subgroup of $(F(Y),\mathscr{T}^*)$. Let 
$G$
 and $\varphi$ be 
as in the conclusion of Lemma \ref{first:lemma}.
Let $\mathscr{T}_\Pi$ be the Tychonoff product topology on $F(Y)^G$; that is,
$(F(Y)^G,\mathscr{T}_\Pi)=(F(Y),\mathscr{T}^*)^G$. 
Since $\varphi$ is a monomorphism,
$\mathscr{T}'=\{\varphi^{-1}(U):U\in \mathscr{T}_\Pi\}$ is a Hausdorff group topology on $F(X)$.

Let 
$g\in G$
be arbitrary. 
By the conclusion of Lemma \ref{first:lemma},
the composition $\pi_g\circ \varphi$ is the identity map on $Z$.
Since both $\varphi$ and $\pi_g$ are group homomorphisms,
this implies that $\pi_g\circ \varphi$ is the identity map on $F(Z)$ as well. Since $(F(Z),\mathscr{T})$ is a subgroup of $(F(Y),\mathscr{T}^*)$, we conclude that 
$\pi_g\circ \varphi\restriction_{F(Z)}: (F(Z),\mathscr{T})\to (F(Y),\mathscr{T}^*)$ is a homeomorphic embedding.
Since this holds for an arbitrary $g \in G$,
it follows that the map
$\varphi\restriction_{F(Z)}:(F(Z),\mathscr{T})\to (F(Y)^G,\mathscr{T}_\Pi)$ is a homeomorphic embedding.
It follows from this that $(F(Z),\mathscr{T})$ is a subspace of 
$(F(X),\mathscr{T}')$.
\end{proof}

\section{Hausdorff and Markov embeddings}

For a subset $Y$ of a topological space $(X,\mathscr{T})$ the symbol $\mathscr{T}\restriction_Y$ denotes the subspace topology 
which $Y$ inherits from $(X,\mathscr{T})$; that is,
$\mathscr{T}\restriction_Y=\{U\cap Y:U\in\mathscr{T}\}$.

\begin{definition}\cite[Definition 3.1(i)]{DS_R}
\label{def:hausdorff:embeddings}
A subgroup $H$ of a group $G$ is called \emph{Hausdorff embedded in $G$} if 
for
every Hausdorff group topology $\mathscr{T}$ on $H$ 
there exists a
Hausdorff group topology $\mathscr{T}'$ on $G$ 
such that $\mathscr{T}=\mathscr{T}'\restriction_H$
(and in this case we say that $\mathscr{T'}$ \emph{extends} $\mathscr{T}$).
\end{definition}

\begin{fact}\cite[Theorem 3.9]{DS_R}
\label{Hausdroff:embedding:theorem}
Let $H$ be an at most countable subgroup of a group $G$. If every metric group topology on $H$ can be extended to a (not necessarily metric) group topology on $G$, then $H$ is Hausdorff embedded in $G$.
\end{fact}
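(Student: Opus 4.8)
The plan is to reduce an arbitrary Hausdorff topology on $H$ to a supremum of \emph{metrizable} ones, extend each of those to $G$ by the hypothesis, and then take the supremum of the extensions on $G$.

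First I would fix a Hausdorff group topology $\mathscr{T}$ on $H$ and let $\mathcal{M}$ denote the family of all metrizable group topologies $\sigma$ on $H$ with $\sigma\subseteq\mathscr{T}$. The first step is to prove $\mathscr{T}=\sup\mathcal{M}$. By the Birkhoff--Kakutani construction, $\mathscr{T}$ is the supremum of the topologies of all $\mathscr{T}$-continuous left-invariant pseudometrics coarser than it, so it suffices to upgrade each such pseudometric to a genuine metric without leaving $\mathscr{T}$. Here is exactly where the countability of $H$ enters: given a $\mathscr{T}$-continuous left-invariant pseudometric $d$ with kernel $N=\{x\in H: d(x,e)=0\}$, the set $N\setminus\{e\}$ is countable, so I can enumerate it as $\{x_k:k\in\N\}$ and, using that $\mathscr{T}$ is Hausdorff, choose for each $k$ a $\mathscr{T}$-continuous left-invariant pseudometric $e_k\le 1$ with $e_k(x_k,e)>0$. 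Then $d^\ast=d+\sum_k 2^{-k}e_k$ is a $\mathscr{T}$-continuous left-invariant \emph{metric} whose balls refine those of $d$, which yields $\sup\mathcal{M}=\mathscr{T}$.

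Next, for each $\sigma\in\mathcal{M}$ the hypothesis provides a group topology $\sigma'$ on $G$ with $\sigma'\restriction_H=\sigma$. I would set $\mathscr{T}'=\sup\{\sigma':\sigma\in\mathcal{M}\}$, which is again a group topology on $G$, and verify $\mathscr{T}'\restriction_H=\mathscr{T}$. The inclusion $\mathscr{T}\subseteq\mathscr{T}'\restriction_H$ is immediate, since each $\sigma=\sigma'\restriction_H\subseteq\mathscr{T}'\restriction_H$ and $\sup\mathcal{M}=\mathscr{T}$. For the reverse inclusion one uses that a basic $\mathscr{T}'$-neighbourhood of $e$ is a finite intersection of $\sigma_i'$-neighbourhoods; intersecting with $H$ turns it into a finite intersection of $\sigma_i$-neighbourhoods, hence into a $\mathscr{T}$-neighbourhood. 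Thus restriction commutes with the supremum here, and $\mathscr{T}'\restriction_H=\mathscr{T}$.

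The hard part will be ensuring that $\mathscr{T}'$ is \emph{Hausdorff} on all of $G$, which is what ``Hausdorff embedded'' demands. Since $\mathscr{T}'\restriction_H=\mathscr{T}$ is Hausdorff, the closure $N=\overline{\{e\}}^{\mathscr{T}'}$ is a normal subgroup of $G$ with $N\cap H=\{e\}$; the subtlety is that an extension $\sigma'$ of a Hausdorff $\sigma$ need not itself be Hausdorff, so $N$ may contain points of $G\setminus H$. Points of $H\setminus\{e\}$ are automatically separated from $e$, because the metric topologies in $\mathcal{M}$ already separate them. To annihilate the remaining part of $N$, I would enlarge the family over which the supremum is taken so that, for \emph{every} $g\in G\setminus\{e\}$, it contains a group topology on $G$ restricting to a topology $\subseteq\mathscr{T}$ on $H$ and satisfying $g\notin\overline{\{e\}}$; adding such topologies leaves $\mathscr{T}'\restriction_H=\mathscr{T}$ intact but forces $\overline{\{e\}}^{\mathscr{T}'}=\{e\}$. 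Producing these point-separating extensions is the genuine obstacle: it must exploit the full strength of the hypothesis that \emph{every} metric group topology on $H$ extends, rather than merely one, and this is the step I expect to require the real work (and the one carried out in \cite{DS_R}).
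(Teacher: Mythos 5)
First, a point of order: the paper does not prove this statement at all --- it is quoted as a Fact from \cite[Theorem 3.9]{DS_R} --- so there is no in-paper proof to compare against, and your proposal has to stand on its own. It does not, for two reasons. The smaller one is in Step 1. Your reduction $\mathscr{T}=\sup\mathcal{M}$ is a true statement for countable $H$, but your argument for it is broken: the topology induced by a $\mathscr{T}$-continuous left-invariant pseudometric (and hence by your upgraded metric $d^\ast$) need \emph{not} be a group topology. Left-invariance makes left translations homeomorphisms, but joint continuity of multiplication --- equivalently, continuity of conjugations at $e$ --- can fail. For instance, on the countable subgroup of the isometry group of the plane generated by one translation $x$ and the rotations $g_k$ by angle $\pi$ about the points $(1/k,0)$, the left-invariant pseudometric $d(g,h)=\lVert g(0)-h(0)\rVert$ satisfies $d(g_k,e)\to 0$ while $d(xg_kx^{-1},e)\to 2$; so its topology is not a group topology, even though $d$ is continuous for the (discrete, hence Hausdorff) ambient topology. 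Consequently, $d^\ast$ being a metric does not place its topology in $\mathcal{M}$, and your chain of reasoning stops. The repair needs countability of $H$ a \emph{second} time, in a place you did not use it: one must build the Birkhoff--Kakutani neighborhood chains to be conjugation-saturated (enumerate $H=\{h_i\}$ and demand $h_iU_{n+1}h_i^{-1}\subseteq U_n$ for $i\le n$), i.e., run the standard $\omega$-balancedness argument; you used countability only to kill the kernel of the pseudometric.

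The fatal problem is Step 3, and you concede it yourself: producing the point-separating topologies that would make $\mathscr{T}'$ Hausdorff is ``the genuine obstacle'' which you explicitly defer to \cite{DS_R}. A proof that ends by deferring its hardest step to the reference being proved is not a proof. Moreover, under the literal reading you adopt --- extensions that may fail to be Hausdorff on $G$ --- the hypothesis gives no control whatsoever over the kernels $\overline{\{e\}}^{\sigma'}$, so there is no visible source for the topologies your plan requires. Note, for instance, that whenever $G=A\rtimes H$ is a semidirect product, \emph{every} group topology on $H$ extends to a group topology on $G$ by pulling back along the retraction $G\to H$; thus the weak hypothesis holds for trivial reasons there, and any argument deriving Hausdorff extensions from it would have to manufacture separation on $A$ out of nothing. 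The way this Fact is actually used in the paper resolves the tension: the extensions fed into Fact~\ref{Hausdroff:embedding:theorem} come from Theorem~\ref{second:lemma}, which produces \emph{Hausdorff} group topologies on $F(X)$. If the hypothesis is read that way (extensions Hausdorff, merely not metrizable), your Step 3 evaporates --- each $\sigma'$ is Hausdorff, hence so is the finer topology $\mathscr{T}'=\sup\{\sigma':\sigma\in\mathcal{M}\}$ --- and your outline, with Step 1 repaired as above and your (correct) Step 2, becomes a complete and essentially standard argument. As written, however, the proposal both mis-argues Step 1 and leaves its central step unproved.
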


\begin{corollary} \label{main:thm:2}
Let $X$ be a set and $Z$ be its at most countable subset. Then every Hausdorff group topology on $F(Z)$ can be extended to a Hausdorff group topology on $F(X)$; that is, $F(Z)$ is Hausdorff embedded in $F(X)$.
\end{corollary}
\begin{proof}
The result follows
from Theorem \ref{second:lemma},
Definition \ref{def:hausdorff:embeddings}
and Fact \ref{Hausdroff:embedding:theorem}.
\end{proof}

\begin{remark}
Dekui Peng suggested that free products of topological groups can be used to extend Corollary \ref{main:thm:2} to arbitrary subsets $Z$ of the set $X$. Indeed, this turned out to be the case.
Let $Z$ be an arbitrary subset of a set $X$ and let 
$\mathscr{T}$
 be a Hausdorff group topology on $F(Z)$. 
By \cite[Theorem 1]{Graev}, there exists a Hausdorff group topology 
$\mathscr{T}'$ 
on the
free product 
$F(Z) \wprod F(X \setminus Z)$
of $F(Z)$ and $F(X \setminus Z)$
which induces the original topology 
$\mathscr{T}$
on $F(Z)$ and the discrete topology on $F(X\setminus Z)$.
Since $F(X) = F(Z) \wprod F(X \setminus Z)$,
the Hausdorff group topology $\mathscr{T}'$ on $F(X)$ extends 
the topology $\mathscr{T}$ on $F(Z)$. 
Therefore,
$F(Z)$ is a Hausdorff embedded in $F(X)$
by Definition \ref{def:hausdorff:embeddings}. 
This finishes the proof of the general version of Corollary \ref{main:thm:2}.

Even though this proof is quite short and avoids the need for results from Section \ref{extension:section}, it relies on the theorem of Graev from \cite{Graev} whose elaborate proof is available only in Russian.
Since we only require the limited version of Corollary \ref{main:thm:2}
with at most countable set $Z$ in this paper, we decided to keep our original proof which uses only sources readily available in English.
\end{remark}

\begin{definition}
\cite[Definition 2.1(ii)]{DS_R}
\label{def:Markov:embeddings}
A subgroup $H$ of a group $G$ is called \emph{Markov embedded in $G$} if $(H,\mathfrak{M}_H)$ is a subspace of $(G,\mathfrak{M}_G)$.
\end{definition}

Item (iii) of our next lemma is a limited version of \cite[Lemma 3.6]{DS_R}. Since \cite[Lemma 3.6]{DS_R} is stated without proof,
we include the proof 
below
for the convenience of the reader.

\begin{lemma}
\label{this:lemma:sep13}
Let $H$ be a subgroup of a group $G$.
\begin{itemize}
\item[(i)]
If $\mathfrak{Z}_H=\mathfrak{M}_H$, then $\mathfrak{M}_H\subseteq \mathfrak{M}_G\restriction_H$.
\item[(ii)]
If $H$ is Hausdorff embedded in $G$, then 
$\mathfrak{M}_H\supseteq \mathfrak{M}_G\restriction_H$.
\item[(iii)] If $H$ is at most countable and Hausdorff embedded in $G$, then
$H$ is also Markov embedded in $G$.
\end{itemize}
\end{lemma}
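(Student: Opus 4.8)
The plan is to prove the three items in the order (i), (ii), (iii), deriving (iii) as a direct consequence of the first two. Throughout I will use that both the Markov topology $\mathfrak{M}$ and the Zariski topology $\mathfrak{Z}$ are functorial with respect to the relevant maps, and in particular that for a subgroup $H\le G$ the inclusion $\mathfrak{Z}_G\restriction_H\subseteq\mathfrak{Z}_H$ always holds (an elementary algebraic set in $G$ intersected with $H$ is elementary algebraic in $H$, since one may substitute coefficients lying in $H$).

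For item (i), I would first argue the general inclusion $\mathfrak{Z}_H\subseteq\mathfrak{M}_G\restriction_H$. The point is that every elementary algebraic set $E_w$ in $H$ is closed in the subspace topology that $H$ inherits from \emph{any} Hausdorff group topology on $G$: a Hausdorff group topology on $G$ restricts to a Hausdorff group topology on $H$, and solution sets of equations are closed in every Hausdorff group topology because group operations are continuous. Hence $E_w$ is $\mathfrak{M}_G\restriction_H$-closed, giving $\mathfrak{Z}_H\subseteq\mathfrak{M}_G\restriction_H$. Now invoking the hypothesis $\mathfrak{Z}_H=\mathfrak{M}_H$ yields $\mathfrak{M}_H=\mathfrak{Z}_H\subseteq\mathfrak{M}_G\restriction_H$, which is exactly the claimed inclusion.

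For item (ii), I would prove the reverse inclusion $\mathfrak{M}_G\restriction_H\subseteq\mathfrak{M}_H$ using the Hausdorff embedding hypothesis. Let $C$ be a $\mathfrak{M}_H$-closed (i.e.\ unconditionally closed) subset of $H$; I want to show $C$ is of the form $D\cap H$ for some $\mathfrak{M}_G$-closed $D\subseteq G$, equivalently that $C$ is $\mathfrak{M}_G\restriction_H$-closed is the \emph{wrong} direction, so more carefully: take $A$ a $\mathfrak{M}_G$-closed subset of $G$ and show $A\cap H$ is $\mathfrak{M}_H$-closed. For this I would show that any point $p\in H\setminus(A\cap H)$ can be separated from $A\cap H$ inside some Hausdorff group topology on $H$. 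Since $p\notin A$ and $A$ is unconditionally closed in $G$, there is a Hausdorff group topology $\mathscr{T}'$ on $G$ in which $A$ is closed and $p\notin\overline{A}$; but \emph{every} Hausdorff topology on $G$ keeps $A$ closed, so the real content is to produce, for a witnessing Hausdorff topology $\mathscr{T}$ on $H$ separating $p$ from $A\cap H$, an extension to $G$. Here the Hausdorff embedding hypothesis delivers a Hausdorff $\mathscr{T}'$ on $G$ with $\mathscr{T}'\restriction_H=\mathscr{T}$; since $A$ is $\mathfrak{M}_G$-closed it is $\mathscr{T}'$-closed, so $A\cap H$ is $\mathscr{T}$-closed, forcing $A\cap H$ to be $\mathfrak{M}_H$-closed. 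Thus $\mathfrak{M}_G\restriction_H\subseteq\mathfrak{M}_H$.

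Finally, item (iii) follows immediately: if $H$ is at most countable and Hausdorff embedded in $G$, then by Markov's theorem that $\mathfrak{Z}_H=\mathfrak{M}_H$ for all countable groups (cited in the introduction) the hypothesis of (i) is satisfied, so (i) gives $\mathfrak{M}_H\subseteq\mathfrak{M}_G\restriction_H$, while (ii) gives $\mathfrak{M}_H\supseteq\mathfrak{M}_G\restriction_H$; combining, $\mathfrak{M}_H=\mathfrak{M}_G\restriction_H$, which by Definition \ref{def:Markov:embeddings} says precisely that $H$ is Markov embedded in $G$. The main obstacle I anticipate is the bookkeeping in (ii): one must be careful that ``unconditionally closed'' is phrased in terms of \emph{closedness} in all Hausdorff topologies, and that separating a point from a closed set is correctly translated into the statement that the trace $A\cap H$ is closed in the particular Hausdorff topology $\mathscr{T}$ on $H$ whose extension is guaranteed by the embedding. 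The Hausdorff embedding hypothesis is used exactly to guarantee that a topology witnessing non-closure at the level of $H$ lifts to $G$ without destroying the $\mathfrak{M}_G$-closedness of $A$.
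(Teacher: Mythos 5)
Your proofs of items (ii) and (iii) are correct and essentially identical to the paper's own: in (ii) you take an arbitrary Hausdorff group topology $\mathscr{T}$ on $H$, extend it to a Hausdorff group topology $\mathscr{T}'$ on $G$ via the Hausdorff embedding hypothesis, note that the $\mathfrak{M}_G$-closed set $A$ is $\mathscr{T}'$-closed, and conclude that $A\cap H$ is $\mathscr{T}$-closed; in (iii) you combine Markov's theorem for countable groups with (i) and (ii). (The digression in (ii) about separating a point $p$ from $A\cap H$ is unnecessary; the clean argument you settle on at the end is the whole proof.)

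Item (i), however, contains a genuine gap. You show that an elementary algebraic set $E_w$ of $H$ is closed in $\mathscr{T}\restriction_H$ for every Hausdorff group topology $\mathscr{T}$ on $G$, and you conclude that $E_w$ is $\mathfrak{M}_G\restriction_H$-closed. This inference swaps quantifiers: to be $\mathfrak{M}_G\restriction_H$-closed, $E_w$ must equal $A\cap H$ for \emph{one} set $A\subseteq G$ that is closed in every Hausdorff group topology on $G$ simultaneously, whereas your argument only yields, for each topology $\mathscr{T}$ separately, some $\mathscr{T}$-closed set $A_{\mathscr{T}}$ with $E_w=A_{\mathscr{T}}\cap H$ (e.g.\ the $\mathscr{T}$-closure of $E_w$); the intersection of these witnesses over all $\mathscr{T}$ need not be closed in every Hausdorff group topology, so no single witness is produced. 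The correct (and the paper's) route is to observe that, since the word $w$ has its coefficients in $H\subseteq G$, the solution set of $w=1$ in $H$ is the trace on $H$ of the solution set of the same equation in $G$; the latter is elementary algebraic in $G$, hence $\mathfrak{Z}_G$-closed, hence $\mathfrak{M}_G$-closed. This yields $\mathfrak{Z}_H\subseteq\mathfrak{Z}_G\restriction_H\subseteq\mathfrak{M}_G\restriction_H$, and the hypothesis $\mathfrak{Z}_H=\mathfrak{M}_H$ then gives (i). Relatedly, your preliminary claim that $\mathfrak{Z}_G\restriction_H\subseteq\mathfrak{Z}_H$ ``always holds'' is false, and its justification is backwards: an elementary algebraic set of $G$ is the solution set of an equation whose coefficients lie in $G$ but need not lie in $H$, so its trace on $H$ need not be algebraic in $H$. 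The inclusion valid for all subgroups is the opposite one, $\mathfrak{Z}_H\subseteq\mathfrak{Z}_G\restriction_H$; the equality $\mathfrak{Z}_G\restriction_H=\mathfrak{Z}_H$ is precisely the nontrivial property of $H$ being \emph{Zariski embedded} in $G$ (see \cite[Definition 2.1(i)]{DS_R} and the remark following Lemma \ref{this:lemma:sep13}). You never actually use this false claim, but it stems from the same confusion about coefficients that produced the gap in (i).
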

\begin{proof}
(i) 
$\mathfrak{M}_H=\mathfrak{Z}_H$ by the assumption of item (i). Note that $\mathfrak{Z}_H\subseteq \mathfrak{Z}_G\restriction_H$. Since $\mathfrak{Z}_G\subseteq \mathfrak{M}_G$, we have $\mathfrak{Z}_G\restriction_H\subseteq \mathfrak{M}_G\restriction_H$. This gives the desired inclusion.

\medskip
(ii) 
Let $X$ be a $\mathfrak{M}_G\restriction_H$-closed subset of $H$. Then there exists a $\mathfrak{M}_G$-closed subset $Y$ of $G$ such that $Y\cap H=X$.
Let $\mathscr{T}$ be a Hausdorff group topology on $H$.
Since $H$ is Hausdorff embedded in $G$, we can use 
Definition \ref{def:hausdorff:embeddings} to find a Hausdorff group topology $\mathscr{T}'$ on $G$ extending $\mathscr{T}$.
Since $Y$ is $\mathfrak{M}_G$-closed, it is also 
$\mathscr{T}'$-closed in $G$.
Therefore, $X=Y\cap H$ is $\mathscr{T}$-closed.
We have checked that $X$ is $\mathscr{T}$-closed in every Hausdorff group topology $\mathscr{T}$ on $H$,
Thus,
$X$ is $\mathfrak{M}_H$-closed.

\medskip
(iii) Suppose that $H$ is at most countable and Hausdorff embedded in $G$. From the first assumption we get $\mathfrak{Z}_H=\mathfrak{M}_H$, as Markov and Zariski topologies coincide for at most countable groups by an old theorem of Markov \cite{Markov46}.
Combining this with (i), we obtain the inclusion $\mathfrak{M}_H\subseteq \mathfrak{M}_G\restriction_H$.
Since $H$ is Hausdorff embedded in $G$, the inverse inclusion 
$\mathfrak{M}_H\supseteq \mathfrak{M}_G\restriction_H$ holds by (ii). This establishes the equality 
$\mathfrak{M}_H= \mathfrak{M}_G\restriction_H$.
Therefore, $(H,\mathfrak{M}_H)$ is a subspace of $(G,\mathfrak{M}_G)$, and so $H$ is Markov embedded in $G$ by 
Definition \ref{def:Markov:embeddings}.
\end{proof}

\begin{remark}
Recall that a subgroup $H$ of a group $G$ is {\em Zariski embedded\/} in $G$ if the Zariski topology of $G$ induces on $H$ the Zariski topology of $H$ \cite[Definition 
2.1(i)]{DS_R}. %
The authors have proved recently that \emph{every} subgroup of a free group is both Zariski embedded and Markov embedded in it \cite{SY}. However, the free group with two generators contains a normal subgroup which is not Hausdorff embedded in it
\cite{SY}. This shows the limits of our Corollary \ref{main:thm:2}.
\end{remark}

\section{Proof of Theorem \ref{main:theorem}}
\label{section:proof}

The following definition recalls well-known set-theoretic concepts.

\begin{definition}
\label{def:club}
Let $G$ be a set and $\mathcal{C} \subseteq [G]^{\leq \omega}$,
where $[G]^{\leq \omega}$ denotes the set of all at most countable subsets of $G$.
\begin{itemize}
\item[(i)] $\mathcal{C}$ is \emph{closed in} $[G]^{\leq \omega}$ if, whenever 
$\{C_n: n \in \N\} \subseteq \mathcal{C}$ \text{ and } 
$C_0 \subseteq C_1 \subseteq \dots \subseteq C_n \subseteq C_{n+1} \subseteq \dots$,
then $\bigcup \{C_n: n \in \N \} \in \mathcal{C}$,
\item[(ii)] $\mathcal{C}$ is \emph{unbounded in $[G]^{\leq \omega}$} if for every $H \in [G]^{\leq \omega}$ there exists $C \in \mathcal{C}$ with $H \subseteq C$, 
\item[(iii)] $\mathcal{C}$ is a \emph{club in $[G]^{\leq \omega}$} (a common abbreviation for ``closed and unbounded'') if $\mathcal{C}$ is both closed and unbounded in $[G]^{\leq \omega}$.
\end{itemize}
\end{definition}

The following lemma gives an example of a club in $F(X)^{\le \omega}$.

\begin{lemma}
\label{natural:club}
Let $X$ be a set and let $G=F(X)$.
Then
the family 
\begin{equation}
\label{eq:family:C}
\mathcal{C}=\{F(Z):Z\in[X]^{\le\omega}\}
\end{equation}
 is a club in $[
G
]^{\le \omega}$. 
\end{lemma}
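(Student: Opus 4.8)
The plan is to verify the two defining properties of a club directly from the definition of $\mathcal{C}$, exploiting the fact stated just before the lemma that the subgroup of $F(X)$ generated by a subset $Z\subseteq X$ is precisely $F(Z)$. The central observation, which makes both halves routine, is that the assignment $Z\mapsto F(Z)$ is monotone and commutes with directed unions over subsets of the \emph{alphabet} $X$. I would first record that every $F(Z)$ with $Z\in[X]^{\le\omega}$ is indeed an at most countable subset of $G$, since $F(Z)$ consists of finite reduced words in the countable alphabet $Z$ and hence is countable; thus $\mathcal{C}\subseteq[G]^{\le\omega}$ and the statement is well-posed.

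For \textbf{unboundedness}, I would take an arbitrary $H\in[G]^{\le\omega}$ and produce a member of $\mathcal{C}$ containing it. Each element $h\in H$ is a reduced word using finitely many letters from $X$; let $\lett{h}\subseteq X$ be that finite \emph{letter set}. Setting $Z=\bigcup_{h\in H}\lett{h}$ gives a countable union of finite subsets of $X$, so $Z\in[X]^{\le\omega}$, and every $h\in H$ lies in $F(Z)$ by construction. Hence $H\subseteq F(Z)\in\mathcal{C}$, which is exactly the unboundedness requirement of Definition \ref{def:club}(ii).

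For \textbf{closedness}, I would start with an increasing chain $F(Z_0)\subseteq F(Z_1)\subseteq\cdots$ of members of $\mathcal{C}$ and show its union lies in $\mathcal{C}$. Here the key step, and the only place where a small argument is needed, is to pass from the chain of subgroups back to a chain of alphabet subsets: I claim that $F(Z_n)\subseteq F(Z_{n+1})$ can be upgraded to $Z_n\subseteq Z_{n+1}$ after replacing each $Z_n$ by $Z_n\cap\left(\bigcup_m Z_m\right)$, or more cleanly by noting that for $x\in X$ one has $x\in F(Z)\iff x\in Z$ (a single letter is a reduced word whose only appearing letter is itself). This gives $Z_n=F(Z_n)\cap X$, so the inclusions of subgroups force $Z_0\subseteq Z_1\subseteq\cdots$, whence $W=\bigcup_n Z_n\in[X]^{\le\omega}$. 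It then remains to check $\bigcup_n F(Z_n)=F(W)$: the inclusion $\subseteq$ is immediate from monotonicity, and for $\supseteq$ any reduced word $w\in F(W)$ uses finitely many letters, all lying in some single $Z_n$ by the increasing chain condition, so $w\in F(Z_n)$. Thus $\bigcup_n F(Z_n)=F(W)\in\mathcal{C}$, establishing Definition \ref{def:club}(i).

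I expect the main obstacle to be nothing more than the bookkeeping in the closedness argument: one must be careful that the chain hypothesis is stated for the \emph{subgroups} $C_n=F(Z_n)$, whereas it is most convenient to reason about the underlying alphabet sets $Z_n$. The identity $F(Z)\cap X=Z$ is what cleanly bridges the two, and once it is in place both directions reduce to the elementary fact that a reduced word involves only finitely many letters and that finitely many letters are absorbed by some single stage of an increasing union. No deeper structure of free groups is required beyond the normal form for reduced words.
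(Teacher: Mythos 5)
Your proof is correct and takes essentially the same route as the paper's: unboundedness by collecting the finitely many letters appearing in each element of $H$, and closedness by converting the chain of subgroups $F(Z_n)$ into a chain of alphabet sets $Z_n$ and using that any reduced word involves only finitely many letters, hence lies in a single stage. Your explicit bridging identity $F(Z)\cap X=Z$ merely fills in a step the paper passes over with ``it follows'' (and your alternative suggestion of replacing $Z_n$ by $Z_n\cap\bigcup_m Z_m$ is a no-op, but you rightly discard it for the clean identity).
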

\begin{proof}

Suppose that
\begin{equation}
\label{chain:of:Cs}
\{C_n: n \in \N\} \subseteq \mathcal{C}
\end{equation}
 and  
\begin{equation}
\label{increasing:chain}
C_0 \subseteq C_1 \subseteq \dots \subseteq C_n \subseteq C_{n+1} \subseteq \dots. 
\end{equation}
By \eqref{eq:family:C} and \eqref{chain:of:Cs}, 
for every $n\in\N$ there exists $Z_n\in [X]^{\le \omega}$ such that 
\begin{equation}
\label{C_n:Z_n}
C_n=F(Z_n).
\end{equation}
 From  \eqref{increasing:chain}  and \eqref{C_n:Z_n} it follows that
\begin{equation}
\label{increasing:chain:Z}
Z_0 \subseteq Z_1 \subseteq \dots \subseteq Z_n \subseteq Z_{n+1} \subseteq \dots.
\end{equation}
Since each $Z_n$ is at most countable subset of $X$, so is the set
$Z=\bigcup_{n\in\N} Z_n$.
Combining this with \eqref{eq:family:C}, we conclude that $F(Z)\in\mathcal{C}$. It easily follows from \eqref{increasing:chain:Z} and 
our definition of $Z$ that
$F(Z)=\bigcup_{n\in\N} F(Z_n)$.
From this and \eqref{C_n:Z_n}, we get
$\bigcup \{C_n: n \in \N \} = \bigcup_{n\in\N} F(Z_n)=F(Z)\in \mathcal{C}$.
By Definition \ref{def:club}(i), this shows that $\mathcal{C}$ is closed in $[G]^{\leq \omega}$.

Let 
$H \in [G]^{\leq \omega}$. For every $h\in H$, denote by $S_h$ the finite set of letters in the alphabet $X$ appearing in the irreducible representation of $h$. Then $Y=\bigcup_{h\in H} S_h\in[X]^{\le\omega}$, so $F(Y)\in\mathcal{C}$ by \eqref{eq:family:C}.
Since $h\in F(S_h)\subseteq F(Y)$ for every $h\in H$, the inclusion $H\subseteq F(Y)$ holds.
By Definition \ref{def:club}(ii), this shows that $\mathcal{C}$ is unbounded in $[G]^{\leq \omega}$.

We established that $\mathcal{C}$ is both closed and unbounded in $[G]^{\leq \omega}$. By Definition \ref{def:club}(iii), this means that 
$\mathcal{C}$ is a club in $[G]^{\leq \omega}$.
\end{proof}

The following fact is a corollary of a reflection 
principle 
established in \cite{DS_R}.

\begin{fact}
\cite[Corollary 5.3]{DS_R}
 \label{this:coro:sep13}
Let $G$ be a group such that the family 
\begin{equation} \label{this:eq2:sep13}
\mathscr{M}(G) = \{H \in 
[G]^{\le\omega}:
H \text{ is a Markov embedded subgroup of } G\}
\end{equation}
contains 
a
club in $[G]^{\leq \omega}$. Then $\mathfrak{Z}_{G}=\mathfrak{M}_{G}$; that is, 
Markov and Zariski topologies on $G$ coincide.
\end{fact}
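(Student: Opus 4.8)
The plan is to prove the nontrivial inclusion $\mathfrak{M}_G\subseteq\mathfrak{Z}_G$, the reverse $\mathfrak{Z}_G\subseteq\mathfrak{M}_G$ being automatic. Fix a club $\mathcal{C}\subseteq\mathscr{M}(G)$ furnished by the hypothesis, so that every $H\in\mathcal{C}$ is an at most countable Markov embedded subgroup. First I would record that $\mathfrak{Z}_G$ and $\mathfrak{M}_G$ already agree on each member of $\mathcal{C}$. Indeed, for $H\in\mathcal{C}$ the subgroup $H$ is at most countable, so Markov's theorem for countable groups gives $\mathfrak{Z}_H=\mathfrak{M}_H$; since every elementary algebraic set of $H$ is the trace on $H$ of the corresponding elementary algebraic set of $G$, one has $\mathfrak{Z}_H\subseteq\mathfrak{Z}_G\restriction_H$; and $\mathfrak{Z}_G\subseteq\mathfrak{M}_G$ together with Markov embeddedness of $H$ yields $\mathfrak{Z}_G\restriction_H\subseteq\mathfrak{M}_G\restriction_H=\mathfrak{M}_H$. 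Chaining these inclusions collapses them to equalities, so
\[
\mathfrak{Z}_G\restriction_H=\mathfrak{M}_G\restriction_H\quad\text{for every }H\in\mathcal{C}.
\]

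It then suffices to show that an arbitrary $\mathfrak{M}_G$-closed set $S$ is $\mathfrak{Z}_G$-closed, that is, $a\in\overline{S}^{\,\mathfrak{Z}_G}$ implies $a\in S$. To exploit the club I would pass to an elementary submodel: for a sufficiently large regular cardinal $\theta$ choose a countable elementary submodel $M\prec H_\theta$ with $G,S,a,\mathcal{C}\in M$ and, using the standard fact that a club in $[G]^{\le\omega}$ contains all closure points of a suitable finitary function coded in $M$, arrange that $H:=M\cap G\in\mathcal{C}$. The decisive point is a reflection claim: $a\in\overline{S}^{\,\mathfrak{Z}_G}$ forces $a\in\overline{S\cap H}^{\,\mathfrak{Z}_H}$. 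I would prove the contrapositive. If $a\notin\overline{S\cap H}^{\,\mathfrak{Z}_H}$, then by the finitary description of the Zariski topology there are finitely many words $w_1,\dots,w_m\in H\wprod\langle x\rangle$ with $S\cap H\subseteq\bigcup_j E_{w_j}$ and $a\notin\bigcup_j E_{w_j}$, all computed in $H$. Each $w_j$ lies in $M$, hence so does the finite set $\{w_1,\dots,w_m\}$; since $E_{w_j}^{H}=E_{w_j}^{G}\cap H$ and $a\in H$, the relations $S\cap H\subseteq\bigcup_j E_{w_j}^{G}$ and $a\notin\bigcup_j E_{w_j}^{G}$ hold. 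Because $S\cap M=S\cap H$ and membership in the definable sets $E_{w_j}^{G}$ is absolute, the model $M$ satisfies ``$S\subseteq\bigcup_j E_{w_j}$'', and elementarity lifts this to the universe, giving $S\subseteq\bigcup_j E_{w_j}^{G}$ while $a$ is omitted; that is, $a\notin\overline{S}^{\,\mathfrak{Z}_G}$, as required.

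With the reflection claim in hand the argument closes quickly. Using the displayed equality on $H$ and the equalities $\mathfrak{Z}_H=\mathfrak{M}_H=\mathfrak{M}_G\restriction_H$, we get $a\in\overline{S\cap H}^{\,\mathfrak{Z}_H}=\overline{S\cap H}^{\,\mathfrak{M}_G\restriction_H}\subseteq\overline{S}^{\,\mathfrak{M}_G}=S$, the last step because $S$ is $\mathfrak{M}_G$-closed. Hence every point of $\overline{S}^{\,\mathfrak{Z}_G}$ lies in $S$, so $S$ is $\mathfrak{Z}_G$-closed, and therefore $\mathfrak{Z}_G=\mathfrak{M}_G$.

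I expect the main obstacle to be the reflection step. Two things must be handled with care there: first, that the membership $H=M\cap G\in\mathcal{C}$ can be secured simultaneously with $M$ containing all the needed parameters, which rests on the characterization of clubs in $[G]^{\le\omega}$ as supersets of the closure points of finitary functions; and second, the correct matching of the range of quantification, namely the identity $S\cap M=S\cap H$ together with absoluteness of membership in the sets $E_{w_j}^{G}$, which is exactly what makes ``$M\models S\subseteq\bigcup_j E_{w_j}$'' equivalent to the merely local containment $S\cap H\subseteq\bigcup_j E_{w_j}^{G}$. Everything else, including the collapse of the inclusions on members of $\mathcal{C}$ and the final short computation, is routine.
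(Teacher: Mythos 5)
Your proposal is correct, but there is nothing in the paper to compare it against line by line: the statement is deliberately labelled a \emph{Fact} and imported from \cite[Corollary 5.3]{DS_R} without proof, so you have reconstructed an argument the paper omits. Your reconstruction is sound at every delicate point: the collapse $\mathfrak{Z}_H=\mathfrak{M}_H=\mathfrak{M}_G\restriction_H$ on club members correctly combines Markov's countable theorem with Markov embeddedness and the general inclusion $\mathfrak{Z}_H\subseteq\mathfrak{Z}_G\restriction_H$ (valid since $E_w^H=E_w^G\cap H$ for $w\in H\wprod\langle x\rangle$); the extraction of finitely many words from $a\notin\overline{S\cap H}^{\,\mathfrak{Z}_H}$ is exactly what the subbase definition of the Zariski topology gives; and the elementarity lift is legitimate because each $w_j$ is a finite object with parameters in $H=M\cap G$, hence an element of $M$, while $S\cap M=S\cap H$ and membership in $E_{w_j}^G$ is absolute. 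Two remarks. First, the Kueker-style detour through finitary functions is unnecessary: for \emph{any} countable $M\prec H_\theta$ with $\mathcal{C},G\in M$ one already has $M\cap G\in\mathcal{C}$, by building inside $M$ (via unboundedness and elementarity) an increasing $\omega$-chain in $\mathcal{C}\cap M$ whose union is $M\cap G$ --- each countable $C\in\mathcal{C}\cap M$ satisfies $C\subseteq M$ --- and then invoking closedness in the sense of Definition \ref{def:club}(i). Second, the submodel machinery can be traded for a direct closing-off construction (for each finite tuple of words over $H_n$ whose solution sets fail to cover $S$, put a witness from $S$ into $H_{n+1}$), which yields a club of countable subgroups satisfying your reflection claim and is closer in spirit to the original argument of \cite{DS_R}; the two routes prove the same reflection step, with elementary submodels merely hiding the bookkeeping.
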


The following corollary was implicitly used in some arguments in
\cite{DS_R} even though it was not stated in this paper explicitly.

\begin{corollary}
\label{Hausdorff:embedded:corollary}
Let $G$ be a group such that the family 
\begin{equation} \label{eq:Hausdorff}
\mathscr{H}(G) = \{H \in 
[G]^{\le\omega}:
H \text{ is a Hausdorff embedded subgroup of } G\}
\end{equation}
contains a club in $[G]^{\leq \omega}$. Then $\mathfrak{Z}_{G}=\mathfrak{M}_{G}$; that is, 
Markov and Zariski topologies on $G$ coincide.
\end{corollary}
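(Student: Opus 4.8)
The plan is to reduce this statement directly to Fact \ref{this:coro:sep13} by showing that the hypothesis concerning Hausdorff embedded subgroups already forces the analogous hypothesis concerning Markov embedded subgroups. The bridge between the two notions is Lemma \ref{this:lemma:sep13}(iii), which asserts that an at most countable, Hausdorff embedded subgroup is automatically Markov embedded. Thus the entire task is to verify a single inclusion between the two families $\mathscr{H}(G)$ and $\mathscr{M}(G)$ and then invoke the cited reflection result.

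First I would extract from the hypothesis a club $\mathcal{C}$ in $[G]^{\le\omega}$ with $\mathcal{C}\subseteq\mathscr{H}(G)$. By the definition of $\mathscr{H}(G)$ in \eqref{eq:Hausdorff}, every member $H$ of $\mathscr{H}(G)$ lies in $[G]^{\le\omega}$---so it is at most countable---and is Hausdorff embedded in $G$. These are precisely the two hypotheses of Lemma \ref{this:lemma:sep13}(iii), whence $H$ is Markov embedded in $G$, that is, $H\in\mathscr{M}(G)$. This establishes the inclusion $\mathscr{H}(G)\subseteq\mathscr{M}(G)$.

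Combining these observations, the same club $\mathcal{C}$ satisfies $\mathcal{C}\subseteq\mathscr{H}(G)\subseteq\mathscr{M}(G)$, so the family $\mathscr{M}(G)$ of \eqref{this:eq2:sep13} contains a club in $[G]^{\le\omega}$. Fact \ref{this:coro:sep13} then applies verbatim and yields $\mathfrak{Z}_G=\mathfrak{M}_G$, as desired.

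I do not anticipate any genuine obstacle: the argument is a one-line set-theoretic inclusion feeding into a cited reflection principle. The only point requiring care is to confirm that membership in $\mathscr{H}(G)$ supplies \emph{both} hypotheses of Lemma \ref{this:lemma:sep13}(iii) simultaneously---at most countability together with Hausdorff embeddedness---which it does by the very formulation of \eqref{eq:Hausdorff}. Accordingly, this corollary is best presented as a short formal consequence of the preceding lemma and fact rather than as a result demanding a substantive new idea.
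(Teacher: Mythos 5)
Your proposal is correct and coincides with the paper's own proof: both establish the inclusion $\mathscr{H}(G)\subseteq\mathscr{M}(G)$ via Lemma \ref{this:lemma:sep13}(iii) (countability plus Hausdorff embeddedness give Markov embeddedness) and then apply Fact \ref{this:coro:sep13} to the same club. Your write-up merely spells out the steps that the paper compresses into one line.
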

\begin{proof}
Note that $\mathscr{H}(G)\subseteq \mathscr{M}(G)$ by Lemma
\ref{this:lemma:sep13}(iii), so we can apply Fact \ref{this:coro:sep13}.
\end{proof}

\noindent
{\bf Proof of Theorem \ref{main:theorem}:}
Let $X$ be 
a
set
and let $G=F(X)$.
Let
$\mathcal{C}$ be the club in $[
G
]^{\le \omega}$ defined in Lemma \ref{natural:club}.

Let $Z\in [X]^{\le\omega}$ be arbitrary. Since $Z$ is an at most countable subset of $X$, $F(Z)$ is a countable subgroup of
$G$. Applying Corollary \ref{main:thm:2},
we conclude that 
$F(Z)$ is 
Hausdorff
embedded in $G$.
Therefore, 
$F(Z)\in\mathscr{H}(G)$ 
by \eqref{eq:Hausdorff}.
Recalling \eqref{eq:family:C}, we conclude that
$\mathcal{C}\subseteq \mathscr{H}(G)$.

We have proved that the family 
$\mathscr{H}(G)$ 
contains a club
$\mathcal{C}$ in $[G]^{\le\omega}$.
Therefore,
$\mathfrak{Z}_{G}=\mathfrak{M}_{G}$ 
by 
Corollary \ref{Hausdorff:embedded:corollary}.
\qed

\section{The
interplay between $\mathfrak{P}_G$ and the Bohr topology $\mathfrak{B}_G$ of a group $G$}
\label{P_G:section}

Recall that the finest precompact group topology on a group $G$ is called its \emph{Bohr topology}. 
We shall use the symbol $\mathfrak{B}_G$ for denoting the Bohr topology of a group $G$. The $\mathfrak{B}_G$-closure of the identity $1$ of $G$ is called the \emph{von Neumann kernel} of $G$; it is the kernel of a natural homomorphism of $G$ into its Bohr compactification $bG$.
The topology $\mathfrak{B}_G$ is Hausdorff if and only if $G$ admits a precompact Hausdorff group topology, or equivalently, if the discrete group $G$ is maximally almost periodic in the sense of von Neumann \cite{vonNeumann}.

The implication (i)$\to$(ii) of the following lemma is known
\cite{DT-Ischia}.

\begin{lemma}
\label{P_G:vs:B_G}
For an arbitrary group $G$, the following conditions are equivalent:
\begin{itemize}
\item[(i)]
$\mathfrak{B}_G$ 
is Hausdorff,
\item[(ii)]
 $\mathfrak{P}_G\subseteq \mathfrak{B}_G$.
\end{itemize}
\end{lemma}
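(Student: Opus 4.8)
The plan is to establish the two implications separately, treating (i)$\Rightarrow$(ii) as essentially a definitional remark and concentrating the real work on (ii)$\Rightarrow$(i), which I would prove by contraposition.

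For (i)$\Rightarrow$(ii), I would first recall that a subset of $G$ is $\mathfrak{P}_G$-closed precisely when it is closed in \emph{every} precompact Hausdorff group topology on $G$; equivalently, $\mathfrak{P}_G=\bigcap\tau$ is the infimum of the family of all such topologies $\tau$, so that $\mathfrak{P}_G\subseteq\tau$ for each individual member $\tau$ of this family. If $\mathfrak{B}_G$ is Hausdorff, then, being the finest precompact group topology, it is itself a precompact Hausdorff group topology and hence a member of that family, whence $\mathfrak{P}_G\subseteq\mathfrak{B}_G$. This is the direction already recorded in \cite{DT-Ischia}.

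For (ii)$\Rightarrow$(i) I would argue contrapositively: assume $\mathfrak{B}_G$ is not Hausdorff and deduce $\mathfrak{P}_G\not\subseteq\mathfrak{B}_G$. By the characterization recalled just before the lemma, $\mathfrak{B}_G$ being non-Hausdorff means that $G$ is not maximally almost periodic, i.e. $G$ admits no precompact Hausdorff group topology at all. Two consequences follow. First, since a finite group carries the compact, hence precompact, discrete Hausdorff topology, $G$ must be infinite. Second, and crucially, the family of precompact Hausdorff group topologies over which $\mathfrak{P}_G$ is defined is empty, so the condition defining $\mathfrak{P}_G$-closed sets holds vacuously for every subset of $G$; that is, $\mathfrak{P}_G$ is the discrete topology.

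It then remains to observe that $\mathfrak{B}_G$ is \emph{not} discrete. Here I would invoke total boundedness: $\mathfrak{B}_G$ is a (possibly non-Hausdorff) totally bounded group topology, so if the singleton $\{1\}$ were $\mathfrak{B}_G$-open, finitely many translates of $\{1\}$ would cover $G$, forcing $G$ to be finite and contradicting the infiniteness established above. Hence $\mathfrak{B}_G$ is non-discrete while $\mathfrak{P}_G$ is discrete, so $\mathfrak{P}_G\not\subseteq\mathfrak{B}_G$, completing the contrapositive. The only genuinely delicate point—the step I would be most careful about—is this vacuous-family argument: one must verify that ``closed in every precompact Hausdorff group topology'' collapses to ``every subset'' exactly when no such topology exists, and check that this is consistent with the stated dichotomy that, for infinite $G$, the topology $\mathfrak{P}_G$ is non-discrete if and only if $G$ is maximally almost periodic.
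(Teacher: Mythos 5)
Your proposal is correct, and its core coincides with the paper's proof: both directions hinge on the same two observations, namely that a Hausdorff $\mathfrak{B}_G$ is itself one of the precompact Hausdorff topologies over which $\mathfrak{P}_G$ is defined (giving (i)$\Rightarrow$(ii)), and that when no precompact Hausdorff group topology exists the defining family is empty, so every subset is vacuously $\mathfrak{P}_G$-closed and $\mathfrak{P}_G$ is discrete. The only divergence is in the endgame of (ii)$\Rightarrow$(i): you prove that $\mathfrak{B}_G$ is non-discrete by invoking the infiniteness of $G$ and the total boundedness of the Bohr topology, whereas the paper argues by contradiction, noting that the inclusion $\mathfrak{P}_G\subseteq\mathfrak{B}_G$ would force $\mathfrak{B}_G$ to be discrete, hence Hausdorff, contradicting the standing assumption. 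Your detour is valid (the finest precompact group topology is indeed totally bounded, and your covering argument is sound), but it is unnecessary: under your contrapositive hypothesis $\mathfrak{B}_G$ is not Hausdorff, and since the discrete topology \emph{is} Hausdorff, $\mathfrak{B}_G$ is automatically non-discrete, with no appeal to total boundedness or to the infiniteness of $G$. So the paper's closing step is strictly more economical, and the extra facts you carefully flag as the ``delicate point'' are exactly the ones that can be dispensed with; everything else matches.
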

\begin{proof}
(i)$\to$(ii)
Since $\mathfrak{B}_G$ is a precompact group topology on $G$ (being the finest precompact group topology on $G$) which is also Hausdorff by (i),
the definition of $\mathfrak{P}_G$ implies that every $\mathfrak{P}_G$-closed subset of $G$
 must be $ \mathfrak{B}_G$-closed.
 This establishes (ii).
 
\medskip
 (ii)$\to$(i)
Assume (ii), yet suppose that $\mathfrak{B}_G$ is not Hausdorff.
Then $G$ does not admit any precompact Hausdorff group topology, and by the definition of $\mathfrak{P}_G$ this means that 
\emph{every} subset of $G$ is $\mathfrak{P}_G$-closed in $G$; that is, $\mathfrak{P}_G$ is the discrete topology.
Combining this with the inclusion $\mathfrak{P}_G\subseteq \mathfrak{B}_G$ given by (ii), we conclude that $\mathfrak{B}_G$
must be discrete as well, in contradiction with our assumption that 
$\mathfrak{B}_G$ is not Hausdorff.
\end{proof}

\begin{remark}
\label{rem:6.2}
If $G$ is a finite group, then both $\mathfrak{P}_G$ and $\mathfrak{B}_G$ are discrete, and so
$\mathfrak{P}_G= \mathfrak{B}_G$.
\end{remark}

\begin{corollary}
\label{non-discretness:of:P_G}
For an infinite group $G$, the following conditions are equivalent:
\begin{itemize}
\item[(i)] $G$ is maximally almost periodic in its discrete topology, 
\item[(ii)] $\mathfrak{P}_G$ is non-discrete.
\end{itemize}
\end{corollary}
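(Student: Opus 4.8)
The plan is to reduce both implications to Lemma \ref{P_G:vs:B_G} together with the equivalence (recalled just before that lemma) between maximal almost periodicity of the discrete group $G$ and Hausdorffness of its Bohr topology $\mathfrak{B}_G$. Throughout I will use that $\mathfrak{B}_G$, being the finest precompact group topology on $G$, is itself a precompact group topology on $G$.

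For the implication (i)$\to$(ii) I would first invoke this equivalence to pass from ``$G$ is maximally almost periodic'' to ``$\mathfrak{B}_G$ is Hausdorff''. Then I would observe the key elementary fact that an infinite group carries no precompact Hausdorff \emph{discrete} topology: a discrete subgroup of a compact Hausdorff group is closed, hence compact, and a compact discrete space is finite, so discreteness would force $G$ to be finite. Consequently $\mathfrak{B}_G$ is non-discrete. Now Lemma \ref{P_G:vs:B_G} yields $\mathfrak{P}_G\subseteq\mathfrak{B}_G$, and a topology coarser than a non-discrete one cannot be discrete: were $\mathfrak{P}_G$ discrete it would be the finest topology on $G$, forcing $\mathfrak{B}_G\subseteq\mathfrak{P}_G$ and hence $\mathfrak{B}_G=\mathfrak{P}_G$, contradicting the non-discreteness of $\mathfrak{B}_G$. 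I therefore conclude that $\mathfrak{P}_G$ is non-discrete.

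For (ii)$\to$(i) I would argue by contraposition, essentially repeating the short argument already used in the (ii)$\to$(i) step of Lemma \ref{P_G:vs:B_G}: if $G$ is not maximally almost periodic in its discrete topology, then $\mathfrak{B}_G$ is not Hausdorff, so $G$ admits no precompact Hausdorff group topology whatsoever. By the definition of $\mathfrak{P}_G$, the requirement that a set be closed in \emph{every} precompact Hausdorff group topology on $G$ is then vacuous, whence every subset of $G$ is $\mathfrak{P}_G$-closed; that is, $\mathfrak{P}_G$ is discrete. Thus the negation of (i) entails the negation of (ii).

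The only step carrying genuine content, and the one I expect to require the most care, is the assertion that an infinite precompact Hausdorff group is non-discrete; it rests on the standard fact that a discrete subgroup of a compact Hausdorff group is finite. Everything else is a formal manipulation of the inclusions relating $\mathfrak{P}_G$, $\mathfrak{B}_G$, and the discrete topology, and the hypothesis that $G$ is infinite is used precisely—and only—to guarantee that a Hausdorff $\mathfrak{B}_G$ is non-discrete.
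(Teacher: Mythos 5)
Your proof is correct and follows essentially the same route as the paper: both directions reduce to Lemma \ref{P_G:vs:B_G}, the fact that an infinite precompact Hausdorff group is non-discrete, and (for (ii)$\to$(i)) the observation that $\mathfrak{P}_G$ is discrete whenever $G$ admits no precompact Hausdorff group topology. The only differences are cosmetic: you spell out the standard compactness argument the paper leaves implicit, and you phrase (ii)$\to$(i) contrapositively where the paper argues directly.
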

\begin{proof}
(i)$\to$(ii)
Let $G$ be an infinite group 
which is maximally almost periodic in its discrete topology. Then $\mathfrak{B}_G$ is
Hausdorff, and so $\mathfrak{P}_G\subseteq \mathfrak{B}_G$
by the implication (i)$\to$(ii) of Lemma \ref{P_G:vs:B_G}.
Since $G$ is infinite and $\mathfrak{B}_G$ is precompact, 
$\mathfrak{B}_G$ is non-discrete.
Therefore, the coarser topology $\mathfrak{P}_G$ cannot be discrete.

\medskip
(ii)$\to$(i) 
Since $\mathfrak{P}_G$ is non-discrete, $G$ must admit at least one precompact Hausdorff group topology, which implies (i). 
\end{proof}

The proof of the implication (ii)$\to$(i) of Corollary 
\ref{non-discretness:of:P_G}
does not require $G$ to be infinite.
On the other 
hand, 
the implication (i)$\to$(ii) 
requires 
$G$ 
to be 
infinite,
as every finite group $G$ is compact (and so maximally almost periodic) in its discrete topology, yet $\mathfrak{P}_G$ is discrete 
by Remark \ref{rem:6.2}.

\begin{corollary}
\label{non-discrete:precompact:topology:on:free:groups}
The following hold for a non-trivial free group $G$: 
\begin{itemize}
\item[(a)] 
the
Bohr topology $\mathfrak{B}_G$ of $G$ is Hausdorff;
\item[(b)] 
the
precompact Markov topology $\mathfrak{P}_G$ of $G$ is non-discrete.
\end{itemize}
\end{corollary}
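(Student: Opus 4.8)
The plan is to establish part (a) directly and then obtain part (b) as an immediate consequence of the machinery already developed, in particular Corollary \ref{non-discretness:of:P_G}. The whole argument reduces to feeding one classical fact about free groups into the equivalences proved earlier in this section.

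For part (a), the strategy is to exhibit enough homomorphisms from $G$ into compact groups to separate points, thereby showing that $G$ is maximally almost periodic in its discrete topology; by the characterization recalled just before Lemma \ref{P_G:vs:B_G}, this is precisely the assertion that $\mathfrak{B}_G$ is Hausdorff. The key input is that free groups are \emph{residually finite}: for every $g\in G$ with $g\neq 1$ there is a homomorphism of $G$ onto a finite group $F$ whose kernel omits $g$. Since each such finite group $F$ is compact in its discrete topology, these are continuous homomorphisms into compact groups separating each non-identity element of $G$ from $1$, and hence (using translations) separating all points of $G$. Thus $G$ is maximally almost periodic, and (a) follows.

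For part (b), first I would observe that a non-trivial free group is infinite, so the standing hypothesis of Corollary \ref{non-discretness:of:P_G} is met. By part (a) the Bohr topology $\mathfrak{B}_G$ is Hausdorff, which is the same as saying that $G$ is maximally almost periodic in its discrete topology, i.e. condition (i) of Corollary \ref{non-discretness:of:P_G} holds for $G$. Applying the implication (i)$\to$(ii) of that corollary then yields that $\mathfrak{P}_G$ is non-discrete, establishing (b).

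The only genuinely non-routine ingredient is the residual finiteness of free groups; everything else is a bookkeeping application of the equivalences already proved. I do not anticipate any real obstacle: the main care needed is simply to invoke residual finiteness correctly and to note that separation of points by finite quotients immediately upgrades to maximal almost periodicity, since finite groups are compact in their discrete topology.
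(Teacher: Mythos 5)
Your proof is correct and follows essentially the same route as the paper: both arguments use the residual finiteness of free groups to establish that $\mathfrak{B}_G$ is Hausdorff in part (a), and then deduce part (b) from the implication (i)$\to$(ii) of Corollary \ref{non-discretness:of:P_G}, noting that a non-trivial free group is infinite. The only cosmetic difference is that you spell out the passage from residual finiteness to maximal almost periodicity via point-separating homomorphisms into finite (compact) groups, whereas the paper phrases the same step as the existence of a precompact Hausdorff group topology; these are equivalent by the characterization recalled at the start of Section \ref{P_G:section}.
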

\begin{proof}
Since $G$ is known to be residually finite, it admits a precompact Hausdorff group topology. Therefore, its finest Hausdorff precompact group topology $\mathfrak{B}_G$ is Hausdorff as well, showing (a). Item (a) implies that $G$ is maximally almost periodic in its discrete topology. Since $G$ is non-trivial (and thus, infinite),
 item (b) follows from the implication (i)$\to$(ii) of Corollary
\ref{non-discretness:of:P_G}.
\end{proof}

\begin{lemma}
\label{Bohr:compacts}
Let $H$ be a subgroup of a group $G$.
Then every 
$\mathfrak{B}_H$-compact subset of $H$ is
$\mathfrak{P}_G$-closed.
\end{lemma}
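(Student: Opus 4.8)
The plan is to unwind the definition of $\mathfrak{P}_G$ directly. Recall that a subset of $G$ is $\mathfrak{P}_G$-closed precisely when it is closed in every precompact Hausdorff group topology on $G$. So, fixing a $\mathfrak{B}_H$-compact set $K\subseteq H$, I would let $\mathscr{T}$ denote an arbitrary precompact Hausdorff group topology on $G$ and aim to show that $K$ is $\mathscr{T}$-closed. (If $G$ admits no such topology, then $\mathfrak{P}_G$ is the discrete topology and there is nothing to prove, so I may assume at least one exists.)

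First I would pass to the subgroup $H$. The subspace topology $\mathscr{T}\restriction_H$ is again a group topology on $H$; it is Hausdorff because $\mathscr{T}$ is, and it is precompact because a subgroup of a precompact group is precompact (indeed, if $(G,\mathscr{T})$ embeds as a subgroup into a compact group $C$, then so does $H$, and the subspace topology $H$ inherits from $G$ agrees with the one it inherits from $C$). Since $\mathfrak{B}_H$ is by definition the \emph{finest} precompact group topology on $H$, every precompact group topology on $H$ is coarser than $\mathfrak{B}_H$; in particular $\mathscr{T}\restriction_H\subseteq \mathfrak{B}_H$. Equivalently, the identity map $(H,\mathfrak{B}_H)\to (H,\mathscr{T}\restriction_H)$ is continuous.

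Next I would transport compactness along this identity map. As $K$ is $\mathfrak{B}_H$-compact and the identity is continuous, its image $K$ is $\mathscr{T}\restriction_H$-compact. The key observation is that compactness is absolute: a subset of $H$ that is compact for the subspace topology $\mathscr{T}\restriction_H$ is also compact as a subset of $(G,\mathscr{T})$, since an open cover of $K$ by $\mathscr{T}$-open sets of $G$ restricts to an open cover by $\mathscr{T}\restriction_H$-open sets of $H$. Hence $K$ is $\mathscr{T}$-compact in $G$. Since $(G,\mathscr{T})$ is Hausdorff, the compact set $K$ is $\mathscr{T}$-closed. As $\mathscr{T}$ was an arbitrary precompact Hausdorff group topology on $G$, it follows that $K$ is closed in every such topology, so $K$ is $\mathfrak{P}_G$-closed.

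I do not anticipate a genuine obstacle here; the argument is a short chain of standard facts about precompact groups and compactness. The only points requiring a little care are the two passages between $H$ and the ambient group $G$—namely, confirming that precompactness descends to the subgroup $H$, and that ``compact in the subspace topology of $H$'' upgrades to ``compact in $G$''—but both are routine, and the maximal almost periodicity (Hausdorffness of the Bohr topology) of $H$ or $G$ plays no role, so the statement holds in full generality.
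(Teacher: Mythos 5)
Your proof is correct and follows essentially the same route as the paper's: restrict the arbitrary precompact Hausdorff group topology $\mathscr{T}$ on $G$ to $H$, observe that this restriction is a precompact group topology and hence coarser than $\mathfrak{B}_H$, transfer compactness of $K$ from $\mathfrak{B}_H$ to $\mathscr{T}\restriction_H$ and then to $(G,\mathscr{T})$, and conclude closedness from Hausdorffness of $\mathscr{T}$. The extra details you supply (precompactness descending to subgroups, absoluteness of compactness, the degenerate case where $G$ admits no precompact Hausdorff topology) are all correct, just spelled out more explicitly than in the paper.
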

\begin{proof}
Let $K$ be a 
$\mathfrak{B}_H$-compact 
subset of $H$.
Let $\mathscr{T}$ be a precompact Hausdorff group topology on $G$. Then 
its restriction 
$\mathscr{T}\restriction_H$
to $H$ is a precompact group topology on $H$. 
Since the Bohr topology of $H$ 
is the finest precompact group topology on $H$,
$\mathscr{T}\restriction_H$ is coarser than 
$\mathfrak{B}_H$.
Since $K$ is a 
$\mathfrak{B}_H$-compact 
subset of $H$, it is also compact in the weaker topology $\mathscr{T}\restriction_H$, and thus also in the topology $\mathscr{T}$ on $G$.
Since $\mathscr{T}$ is Hausdorff, $K$ is $\mathscr{T}$-closed.

We have proved that $K$ is closed in every precompact Hausdorff group topology $\mathscr{T}$ on $G$. Therefore, $K$ is $\mathfrak{P}_G$-closed by the definition of $\mathfrak{P}_G$.
\end{proof}

\begin{corollary}
\label{corollary:when:H=G}
Every $\mathfrak{B}_G$-compact subset of a group $G$
is $\mathfrak{P}_G$-closed in it.
\end{corollary}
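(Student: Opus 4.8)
The plan is to obtain this statement as an immediate specialization of Lemma \ref{Bohr:compacts}. Every group $G$ is a subgroup of itself, so Lemma \ref{Bohr:compacts} applies with the choice $H=G$. First I would record the only substantive point: when $H=G$, the relative Bohr topology coincides with the absolute one, $\mathfrak{B}_H=\mathfrak{B}_G$. This is immediate from the characterization of the Bohr topology as the finest precompact group topology on the group under consideration, since that prescription does not change when the group plays the role of $H=G$.

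With that identification in hand, Lemma \ref{Bohr:compacts} asserts precisely that every $\mathfrak{B}_G$-compact subset of $G$ is $\mathfrak{P}_G$-closed in it, which is exactly the desired conclusion. No further argument is required.

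I do not expect any genuine obstacle here; the corollary is a verbatim instance of the preceding lemma once one substitutes $H=G$. The value of stating it separately is purely expository, isolating the $H=G$ case that will be convenient to cite later (for instance in conjunction with Corollary \ref{non-discrete:precompact:topology:on:free:groups} and the Bohr-convergent sequence of Thom, when analyzing $\mathfrak{P}_G$ for free groups).
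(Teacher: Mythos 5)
Your proposal is correct and matches the paper's intent exactly: the paper states Corollary \ref{corollary:when:H=G} with no proof precisely because it is the instance $H=G$ of Lemma \ref{Bohr:compacts}, which is the specialization you carry out. Your observation that $\mathfrak{B}_H=\mathfrak{B}_G$ when $H=G$ is the only point needing mention, and you handle it properly.
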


\begin{theorem}
\label{homeo:corollary}
Let $H$ be a subgroup of a group $G$, 
$K$ be a $\mathfrak{B}_H$-compact subset of $H$ and 
$\iota_K: (K,\mathfrak{B}_H\restriction_K)\to (K,\mathfrak{P}_G\restriction_K)$ be the identity map of $K$.
Then:
\begin{itemize}
\item[(i)]
$\iota_K$
is a closed map;
\item[(ii)]
if
$\mathfrak{B}_G$ is Hausdorff, then $\iota_K$ is a homeomorphism. 
\end{itemize}
\end{theorem}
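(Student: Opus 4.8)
The plan is to establish (i) directly from Lemma~\ref{Bohr:compacts}, and then to derive (ii) by supplementing the closedness obtained in (i) with a continuity (topology-comparison) argument, invoking the elementary fact that a continuous closed bijection is a homeomorphism.

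For (i), I would take an arbitrary $\mathfrak{B}_H\restriction_K$-closed subset $C$ of $K$ and show that its image $\iota_K(C)=C$ is $\mathfrak{P}_G\restriction_K$-closed. Since $K$ is $\mathfrak{B}_H$-compact and $C$ is closed in it, $C$ is itself compact; as compactness is an intrinsic property, passing through the nested subspaces shows that $C$ is a $\mathfrak{B}_H$-compact subset of $H$. Lemma~\ref{Bohr:compacts} then yields that $C$ is $\mathfrak{P}_G$-closed in $G$, whence $C=C\cap K$ is $\mathfrak{P}_G\restriction_K$-closed in $K$. This proves that $\iota_K$ is a closed map. Observe that this part uses no hypothesis on $\mathfrak{B}_G$ and no separation assumption on $\mathfrak{B}_H$, since a closed subset of a compact space is compact regardless of separation axioms.

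For (ii), since $\iota_K$ is the identity map it is a bijection, and by (i) it is closed; hence it suffices to verify in addition that $\iota_K$ is continuous, i.e.\ that $\mathfrak{P}_G\restriction_K\subseteq \mathfrak{B}_H\restriction_K$ as topologies on $K$. To obtain this I would assemble the inclusion $\mathfrak{P}_G\subseteq\mathfrak{B}_G$, which is valid because $\mathfrak{B}_G$ is Hausdorff by the hypothesis of (ii), via the implication (i)$\to$(ii) of Lemma~\ref{P_G:vs:B_G}. Restricting to the subgroup $H$ gives $\mathfrak{P}_G\restriction_H\subseteq\mathfrak{B}_G\restriction_H$. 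Next I would note that $\mathfrak{B}_G\restriction_H$ is a precompact group topology on $H$, hence coarser than the finest precompact group topology $\mathfrak{B}_H$ on $H$, so that $\mathfrak{B}_G\restriction_H\subseteq\mathfrak{B}_H$. Chaining these inclusions yields $\mathfrak{P}_G\restriction_H\subseteq\mathfrak{B}_H$, and restricting once more to $K$ gives the required $\mathfrak{P}_G\restriction_K\subseteq\mathfrak{B}_H\restriction_K$. Thus $\iota_K$ is a continuous closed bijection, and therefore a homeomorphism.

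The routine points to double-check are that compactness transfers correctly between the nested subspaces $C\subseteq K\subseteq H$ in (i), and that the restriction of a precompact group topology to a subgroup is again precompact, so that $\mathfrak{B}_G\restriction_H$ may legitimately be compared with the finest precompact topology $\mathfrak{B}_H$. The only genuinely load-bearing external inputs are Lemma~\ref{Bohr:compacts} for (i) and Lemma~\ref{P_G:vs:B_G} for the comparison $\mathfrak{P}_G\subseteq\mathfrak{B}_G$ in (ii). I do not anticipate a serious obstacle beyond keeping the directions of the topology inclusions straight and correctly matching them to continuity versus closedness of the identity map.
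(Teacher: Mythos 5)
Your proposal is correct and follows essentially the same route as the paper's proof: part (i) is obtained by applying Lemma~\ref{Bohr:compacts} to a closed (hence compact) subset of $K$, and part (ii) chains the inclusions $\mathfrak{P}_G\subseteq\mathfrak{B}_G$ (from Lemma~\ref{P_G:vs:B_G}), $\mathfrak{P}_G\restriction_H\subseteq\mathfrak{B}_G\restriction_H\subseteq\mathfrak{B}_H$, and their restriction to $K$ to get continuity, concluding that a continuous closed bijection is a homeomorphism. No gaps; the points you flag for double-checking (compactness passing through nested subspaces, precompactness of the restricted topology) are exactly the routine verifications the paper also relies on.
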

\begin{proof}
(i)
Let $F$ be a closed subset of $(K,\mathfrak{B}_H\restriction_K)$.
Since $K$ is $\mathfrak{B}_H$-compact subset of $H$, so is its closed subset $F$. Applying Lemma \ref{Bohr:compacts} to $F$, we conclude that
$F$ is $\mathfrak{P}_G$-closed in $G$.
Since $F\subseteq K$, it follows that $F$ is $\mathfrak{P}_G\restriction_K$-closed in $K$.
We proved that the image $\iota_{K}(F)=F$ of every 
$\mathfrak{B}_H\restriction_K$-closed subset $F$ of $K$ under the 
map $\iota_{K}$ is $\mathfrak{P}_G\restriction_K$-closed in $K$, which implies (i).

\medskip
(ii)
Assume that $\mathfrak{B}_G$ is Hausdorff.
Then 
$\mathfrak{P}_G\subseteq \mathfrak{B}_G$
by the implication (i)$\to$(ii) of Lemma \ref{P_G:vs:B_G}.
Since $\mathfrak{B}_G$ is a precompact Hausdorff group topology on $G$, its restriction $\mathfrak{B}_G\restriction_H$ to the subgroup $H$ of $G$ is a precompact Hausdorff group topology on $H$, so $\mathfrak{B}_G\restriction_H\subseteq \mathfrak{B}_H$
by the maximality of the Bohr topology of $H$.
From $\mathfrak{P}_G\subseteq \mathfrak{B}_G$ we obtain also
$\mathfrak{P}_G\restriction_H\subseteq \mathfrak{B}_G\restriction_H$.
Thus,
$\mathfrak{P}_G\restriction_H\subseteq \mathfrak{B}_H$.
Since $K$ is a subset of $H$, it follows that
$\mathfrak{P}_G\restriction_K=(\mathfrak{P}_G\restriction_H)\restriction_K\subseteq \mathfrak{B}_H\restriction_K$.
This inclusion implies the continuity of the map $\iota_K$.
Since this map is also closed by (i), we conclude that 
$\iota_K$ is a homeomorphism.
\end{proof}

\begin{corollary}
\label{the:same:topologies:on:Bohr:compact:subsets}
Let $G$ be a maximally almost periodic group in its discrete topology. Then $\mathfrak{B}_G$ and $\mathfrak{P}_G$ induce the same (subspace) topology on every Bohr compact subset 
of $G$.
\end{corollary}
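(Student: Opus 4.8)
The plan is to specialize Theorem~\ref{homeo:corollary} to the case $H=G$ and apply it to an arbitrary Bohr compact subset of $G$. The hypothesis that $G$ is maximally almost periodic in its discrete topology is precisely the assertion that $\mathfrak{B}_G$ is Hausdorff, so item~(ii) of Theorem~\ref{homeo:corollary} becomes available. A ``Bohr compact subset'' of $G$ means a subset that is compact in the topology $\mathfrak{B}_G$, which (when $H=G$) is exactly a $\mathfrak{B}_G$-compact subset $K$ in the sense used in Theorem~\ref{homeo:corollary}.

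First I would let $K$ be an arbitrary $\mathfrak{B}_G$-compact subset of $G$ and set $H=G$ in Theorem~\ref{homeo:corollary}. Then $\mathfrak{B}_H=\mathfrak{B}_G$ and $\mathfrak{P}_G\restriction_K$ is the subspace topology inherited from $\mathfrak{P}_G$, while $\mathfrak{B}_H\restriction_K=\mathfrak{B}_G\restriction_K$ is the subspace topology inherited from $\mathfrak{B}_G$. Since $G$ is maximally almost periodic in its discrete topology, $\mathfrak{B}_G$ is Hausdorff, so the hypothesis of Theorem~\ref{homeo:corollary}(ii) is satisfied. Applying that item, the identity map $\iota_K:(K,\mathfrak{B}_G\restriction_K)\to(K,\mathfrak{P}_G\restriction_K)$ is a homeomorphism.

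Finally I would conclude that, because the identity map between the two subspace topologies on $K$ is a homeomorphism, those two topologies coincide: $\mathfrak{B}_G\restriction_K=\mathfrak{P}_G\restriction_K$. Since $K$ was an arbitrary Bohr compact subset of $G$, this establishes that $\mathfrak{B}_G$ and $\mathfrak{P}_G$ induce the same subspace topology on every Bohr compact subset of $G$, which is the desired conclusion.

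There is essentially no obstacle here: the corollary is a direct specialization of Theorem~\ref{homeo:corollary} to $H=G$, together with the observation that maximal almost periodicity in the discrete topology is equivalent to the Hausdorffness of $\mathfrak{B}_G$. The only point requiring a word of care is the identification of the phrase ``Bohr compact subset'' with a $\mathfrak{B}_G$-compact subset and the matching of $\mathfrak{B}_H$ with $\mathfrak{B}_G$ under the substitution $H=G$; once these identifications are made, the statement follows immediately from part~(ii) of the theorem.
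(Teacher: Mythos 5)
Your proposal is correct and is essentially the paper's own proof: the paper likewise deduces Hausdorffness of the Bohr topology from the hypothesis that $G$ is maximally almost periodic in its discrete topology and then invokes item~(ii) of Theorem~\ref{homeo:corollary} (with $H=G$), exactly as you do. Your write-up is merely more explicit about the substitution $H=G$ and about why a homeomorphic identity map forces the two subspace topologies to coincide.
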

\begin{proof}
Since $G$ is maximally almost periodic in its discrete topology,
$\mathfrak{P}_G$ is Hausdorff. Now the conclusion follows from item (ii) of Theorem \ref{homeo:corollary}.
\end{proof}

\begin{corollary}
\label{sequences:in:P_G}
If $G$ is a non-commutative free group, then $(G,\mathfrak{P}_G)$ 
contains a countably infinite compact Hausdorff subspace with a single non-isolated point.
\end{corollary}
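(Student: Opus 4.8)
The plan is to exhibit, inside $(G,\mathfrak{P}_G)$, a subspace homeomorphic to a non-trivial convergent sequence together with its limit, since such a space is precisely a countably infinite compact Hausdorff space with a unique non-isolated point. The strategy is to first locate such a sequence in the \emph{Bohr} topology of a suitable free subgroup of rank $2$, where its existence is guaranteed by a theorem of Thom, and then transport it verbatim to $\mathfrak{P}_G$ using Theorem \ref{homeo:corollary}(ii).

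First I would write $G=F(X)$ with $|X|\geq 2$, pick two distinct letters $a,b\in X$, and let $H=F(\{a,b\})$ be the subgroup of $G$ generated by $\{a,b\}$, so that $H$ is a free group of rank $2$. By the theorem of Thom \cite{Thom}, $H$ admits a non-trivial sequence converging to its identity element $1$ in the Bohr topology $\mathfrak{B}_H$. Since $H$ is residually finite and hence maximally almost periodic, $\mathfrak{B}_H$ is Hausdorff; in a Hausdorff topology no value other than the limit can be attained infinitely often, so after passing to a subsequence I may assume that this sequence $(g_n)_{n\in\N}$ consists of pairwise distinct elements, all different from $1$.

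Next I would set $K=\{g_n:n\in\N\}\cup\{1\}$. Because $(g_n)_{n\in\N}$ converges to $1$ in $\mathfrak{B}_H$, the set $K$ is $\mathfrak{B}_H$-compact, and in the subspace topology $\mathfrak{B}_H\restriction_K$ each $g_n$ is isolated while $1$ is the unique non-isolated point; thus $(K,\mathfrak{B}_H\restriction_K)$ is a countably infinite compact Hausdorff space with a single non-isolated point. Now, since $G$ is a non-trivial free group, its Bohr topology $\mathfrak{B}_G$ is Hausdorff by Corollary \ref{non-discrete:precompact:topology:on:free:groups}(a). Hence Theorem \ref{homeo:corollary}(ii) applies to the $\mathfrak{B}_H$-compact set $K\subseteq H$ and shows that the identity map $\iota_K:(K,\mathfrak{B}_H\restriction_K)\to (K,\mathfrak{P}_G\restriction_K)$ is a homeomorphism. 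Consequently $(K,\mathfrak{P}_G\restriction_K)$ is homeomorphic to $(K,\mathfrak{B}_H\restriction_K)$, so $(G,\mathfrak{P}_G)$ contains $K$ as the desired countably infinite compact Hausdorff subspace with a single non-isolated point.

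The genuinely non-trivial ingredient—and the main obstacle were one to seek a self-contained argument—is the existence of a non-trivial Bohr-convergent sequence in $F(\{a,b\})$, i.e. Thom's theorem \cite{Thom}; everything else is bookkeeping built on Theorem \ref{homeo:corollary}. It is worth emphasizing that the role of Theorem \ref{homeo:corollary}(ii) is exactly to guarantee that the coarser $\mathfrak{P}_G$-subspace topology on $K$ does not collapse the sequence: a priori $\mathfrak{P}_G\subseteq \mathfrak{B}_G$, so one must know that on the compact set $K$ the two topologies agree, which is supplied by the closedness of $\iota_K$ (Theorem \ref{homeo:corollary}(i)) together with the Hausdorffness of $\mathfrak{B}_G$.
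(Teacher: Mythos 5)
Your proposal is correct and follows essentially the same route as the paper's own proof: take $H=F(\{a,b\})$ inside $G$, invoke Thom's theorem to obtain a non-trivial $\mathfrak{B}_H$-convergent sequence, form the compact set consisting of the sequence together with its limit, and transport its topology to $\mathfrak{P}_G$ via Theorem~\ref{homeo:corollary}(ii) using the Hausdorffness of $\mathfrak{B}_G$ from Corollary~\ref{non-discrete:precompact:topology:on:free:groups}(a). The only cosmetic difference is that you pass to a subsequence to ensure faithful indexing, whereas the paper quotes Thom's result as directly providing a faithfully indexed sequence.
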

\begin{proof}
Let $G$ a non-commutative free group. Then $G=F(X)$
for a suitable set $X$ with  
at least two elements $x_0,x_1$.
Define $Y=\{x_0,x_1\}$ and $H=F(Y)$.
Now $H$ is a subgroup of $G$. By the result of Thom,
$H$ contains a 
faithfully indexed
sequence converging in its Bohr topology $\mathfrak{B}_H$ \cite[Proposition 3.7]{Thom}. 
Denote by $S$ this sequence taken together with its limit.
Then 
$S$ is a 
$\mathfrak{B}_H$-compact subset of $H$.
Since $\mathfrak{B}_G$ is Hausdorff by
Corollary \ref{non-discrete:precompact:topology:on:free:groups}(a), %
 the spaces $(S,\mathfrak{B}_H\restriction_S)$ and $(S,\mathfrak{P}_G\restriction_S)$ are homeomorphic
by 
item (ii) of Theorem \ref{homeo:corollary}.
Similarly,
$\mathfrak{B}_H$ is Hausdorff
by 
Corollary \ref{non-discrete:precompact:topology:on:free:groups}(a),
so 
the
restriction $\mathfrak{B}_H\restriction_S$
of $\mathfrak{B}_H$ 
to $S$
is Hausdorff as well.
Thus, $(S,\mathfrak{B}_H\restriction_S)$ is a compact Hausdorff space.
Since it is also a convergent sequence taken together with its limit, 
$(S,\mathfrak{B}_H\restriction_S)$ is 
a countably infinite compact Hausdorff space with a single non-isolated point.
\end{proof}

\noindent
{\bf Proof of Theorem \ref{precompact:Markov}:}
Let $G$ be a non-commutative free group.
Then the space $(G, \mathfrak{Z}_G)$ is Noetherian;
that is, each strictly decreasing sequence of its closed subsets stabilises
\cite[Corollary 5.2]{DT-Ischia}.
Therefore, all Hausdorff subspaces of $(G, \mathfrak{Z}_G)$ are finite.
On the other hand, 
$(G, \mathfrak{P}_G)$ contains a countably infinite Hausdorff subspace by Corollary \ref{sequences:in:P_G}.
Since $\mathfrak{Z}_G=\mathfrak{M}_G$ by Theorem
\ref{main:theorem},
it
follows that
$\mathfrak{M}_G\neq \mathfrak{P}_G$.
\qed

\begin{remark}
We note that Corollary \ref{corollary:when:H=G} does not offer anything really new for commutative groups $G$. Indeed, if $G$ is abelian, then $\mathfrak{B}_G$-compact sets are finite (\cite{G}; see also \cite[Corollary 6.4]{DS_Advances} for a more general result), and finite sets are $\mathfrak{P}_G$-closed because $\mathfrak{P}_G$ is a $T_1$-topology.
Similar comment applies to 
Corollary \ref{the:same:topologies:on:Bohr:compact:subsets}
(as well as 
Lemma \ref{Bohr:compacts} and 
Theorem \ref{homeo:corollary}), as finite subspaces of $T_1$-spaces are discrete.
\end{remark}

\section{Two new classes of groups defined by means of the precompact Markov topology}
\label{sec:11}

Inspired by the class $\mathcal{MZ}$, we denote by $\mathcal{PZ}$ the class of groups for which precompact Markov topology and Zariski topology coincide. Similarly,
we shall use the notation $\mathcal{PM}$ for denoting the class of groups for which precompact Markov topology and Markov topology coincide. Since 
$\mathfrak{Z}_G\subseteq \mathfrak{M}_G\subseteq \mathfrak{P}_G$ 
for every group $G$, 
the following inclusions hold:
\begin{equation}
\label{11:hj}
\mathcal{MZ}\supseteq \mathcal{PZ}\subseteq \mathcal{PM}.
\end{equation}

Our next remark shows that both inclusions in \eqref{11:hj} are proper and there are no other inclusions beyond the ones displayed.
\begin{remark}
\label{rem:11.1:h}
(i) Let $G=S(X)$ be an infinite symmetric group. Then $\mathfrak{Z}_G=\mathfrak{M}_G$ is the non-discrete topology 
of the pointwise convergence on $X$ \cite{BGP}, yet the topology 
$\mathfrak{P}_G$ is discrete \cite[Example 4.4(ii)]{DT-Ischia}. Therefore,
$G$ is in the class $\mathcal{MZ}$ but does not belong to the class $\mathcal{PM}$, and so is not in the smaller class $\mathcal{PZ}$ either. 

\smallskip
(ii) 
Let $G$ be 
the group 
from Example \ref{exa:Hesse}.
Then $\mathfrak{Z}_G$ is non-discrete yet $\mathfrak{M}_G$ is discrete. 
Since $\mathfrak{M}_G\subseteq \mathfrak{P}_G$, it follows that the topology $\mathfrak{P}_G$ is also discrete.
Therefore, $G$ is in the class $\mathcal{PM}$ but does not belong to the class $\mathcal{MZ}$, and so is not in the smaller class $\mathcal{PZ}$ either. 
\end{remark}

Item (i) of the following problem is a ``precompact version'' of Markov's problem on the description of (the groups in) the class $\mathcal{MZ}$.
\begin{problem}
\label{heir:of:markov}
\begin{itemize}
\item[(i)] Describe the class $\mathcal{PZ}$.
\item[(ii)] Describe the class $\mathcal{PM}$.
\end{itemize}
\end{problem}

\begin{remark}
\label{rem:6.3}
(i) It was proved in \cite{DS_MZ} that abelian groups belong to the class $\mathcal{PZ}$ (and thus, to the wider 
classes
$\mathcal{MZ}$
and
$\mathcal{PM}$). 

(ii) It was shown in \cite[Proposition 3.3]{DT-2012}
that the Heisenberg group $H$ over any field $K$ of characteristic 
$0$ is a nilpotent group of class $2$ 
such that $\mathfrak{M}_H$ is non-discrete while $\mathfrak{P}_H$ is discrete, so
$H$
does not belong to the class $\mathcal{PM}$ (and thus, does not belong to the narrower class $\mathcal{PZ}$ either).
\end{remark}

In connection with Remark \ref{rem:6.3}(ii)
and \eqref{11:hj},
it 
remains unclear if 
nilpotent groups 
belong to 
the
class
$\mathcal{MZ}$
(\cite[Question 12.1]{DS_MZ}; repeated in \cite[Question 8.13]{DT-Ischia} and \cite[Question 7.3]{DT-2012}).

\begin{question}
\label{question:2}
Can every group be represented as a quotient group of a group 
from the class $\mathcal{PZ}$, or at least a wider class $\mathcal{PM}$?
\end{question}

Since $\mathcal{PZ}\subseteq \mathcal{MZ}$ by 
\eqref{11:hj},
a positive answer to the $\mathcal{PZ}$-version of this question would strengthen
Corollary \ref{universal:quotient}. By Theorem \ref{precompact:Markov}, non-commutative free groups do not belong to the class $\mathcal{PM}$ (and hence to its subclass $\mathcal{PZ}$), so the idea of the proof of Corollary \ref{universal:quotient} cannot be used to answer Question \ref{question:2} positively. By the same reason, a negative answer to this question would imply that the correspondent class of groups is not closed under taking quotients.

In connection with Theorem \ref{every:group:is:a:quotient:of:MZ:group} and Corollary \ref{subgroup:corollary}, the following question
arises:
\begin{question}
\begin{itemize}
\item[(i)]
Can every group be represented as a \emph{normal} subgroup of a group from the class $\mathcal{MZ}$?
\item[(ii)]
Does every \emph{normal} subgroup of a group in the class $\mathcal{MZ}$ belong to the class $\mathcal{MZ}$?
\end{itemize}
\end{question}
Remark \ref{rem:11.1:h}(ii) gives an
example of
a group outside the class 
$\mathcal{MZ}$,
so
positive answers to both items of this question are mutually exclusive.

\begin{question}
\label{question:3}
Can every group be represented as a (normal) subgroup of a group 
from the class $\mathcal{PZ}$, or at least a wider class $\mathcal{PM}$?
\end{question}
Since $\mathcal{PZ}\subseteq \mathcal{MZ}$ by 
\eqref{11:hj},
a positive answer to the $\mathcal{PZ}$-version of this question would strengthen
Theorem \ref{every:group:is:a:quotient:of:MZ:group}. 
Since symmetric 
groups do not belong to the class $\mathcal{PM}$ (and hence to its subclass $\mathcal{PZ}$)
by Remark \ref{rem:11.1:h}(i),
 the idea of the proof of 
Theorem \ref{every:group:is:a:quotient:of:MZ:group}
cannot be used to answer Question \ref{question:3} positively. 
By the same reason, a negative answer to this question would imply that the correspondent class of groups is not closed under taking (normal) subgroups.

\bigskip
\noindent
{\bf Acknowledgements:\/}
We are grateful to Professor
Kazuhiro Kawamura for a stimulating question resolved in Corollary \ref{corollary:1.4}.

\end{document}